\newtheorem{thm}{Theorem}
\newtheorem{lem}[thm]{Lemma}
\newcommand{\llll}[1] {\left #1}
\newcommand{\rrrr}[1] {\right #1}
\newcommand{\dddd}[2]{\dfrac{#1}{#2}}
\newcommand{\aaaa}{\alpha}
\newcommand{\ssss}{\sigma}
\newcommand{\dddddd}{\delta}
\newcommand{\llllll}{\lambda}
\newcommand{\bbbb}{\beta}
\newcommand{\GGGG}{\Gamma}
\newcommand{\gggg}{\gamma}
\newcommand{\zzzz}{\zeta}
\newcommand{\be}{\begin{equation}}
\newcommand{\ee}{\end{equation}}
\newcommand{\baa}{\begin{align*}}
\newcommand{\eaa}{\end{align*}}
 \journalname{Computational and Applied Mathematics}
\begin{document}

\title{Asymptotic expansions and approximations for the Caputo derivative
\thanks{The third author is supported by the Bulgarian Academy of Sciences through the Program for Career Development of Young Scientists, Grant DFNP-17-88/28.07.2017, Project “Efficient Numerical Methods with an Improved Rate of Convergence for Applied Computational Problems”, by the Bulgarian National Fund of Science under Project DN 12/5-2017, Project “Efficient Stochastic Methods and Algorithms for Large-Scale Problems”, and  Project DN 12/4-2017, Project “Advanced Analytical and Numerical Methods for Nonlinear Differential Equations with Applications in Finance and Environmental Pollution”.
}
}
\author{Yuri Dimitrov \and Radan Miryanov \and Venelin Todorov}

\authorrunning{Y. Dimitrov \and R. Miryanov \and V. Todorov} 

\institute{
           Y. Dimitrov \at
					Department of Mathematics and Physics,
          University of Forestry, Sofia  1756, Bulgaria\\
              \email{yuri.dimitrov@ltu.bg}
           \and
					 R. Miryanov \at
			Department of Statistics and Applied Mathematics,
      University of Economics, Varna  9002, Bulgaria
              \email{miryanov@ue-varna.bg}
              \and
               V. Todorov \at
           Institute of Mathematics and Informatics, 
					Bulgarian Academy of Sciences,\\
					Acad. G. Bonchev Str, bl. 8, Sofia 1113, Bulgaria \email{vtodorov@math.bas.bg}\\
					Institute of Information and Communication   Technologies, 	Bulgarian Academy of Sciences,\\
					Acad. G. Bonchev Str, bl. 25A, Sofia 1113, Bulgaria \email{venelin@parallel.bas.bg}
 }

\date{Received: date / Accepted: date}

\maketitle

\begin{abstract}
In this paper we use the asymptotic expansions of the
binomial coefficients and the weights of the L1 approximation to obtain approximations of order $2-\alpha$
and second-order approximations of the Caputo derivative by modifying the weights of the shifted Gr\"{u}nwald-Letnikov difference approximation and the L1 approximation of the Caputo derivative. A modification of the shifted Gr\"{u}nwald-Letnikov approximation is obtained which allows second-order numerical solutions of fractional differential equations with arbitrary values of the solutions and their first derivatives at the initial point.
\keywords{Binomial coefficient \and   Asymptotic expansion \and Approximation of the Caputo derivative \and Numerical solution}
 \subclass{11B65 \and 34A07 \and 34E05 \and 65D30}
\end{abstract}

\section{Introduction}
 The  Caputo and Riemann-Liouville fractional derivatives are the two main approaches for generalizing the integer order derivatives. When $0<\aaaa<1$ the Caputo and Riemann-Liouville  derivatives with a lower limit at the point zero are defined as
$$y^{(\aaaa)}(x)=D^\aaaa y(x)=\dddd{1}{\GGGG(1-\aaaa)}\int_0^x \dddd{y'(t)}{(x-t)^\aaaa}dt,
$$
$$D_{RL}^\aaaa y(x)=\dddd{1}{\GGGG(1-\aaaa)}\dddd{d}{dx}\int_0^x \dddd{y(t)}{(x-t)^\aaaa}dt.
$$
 The Caputo and Riemann-Liouville derivatives are related as
$$\quad D_{RL}^\aaaa y(x)=D^\aaaa y(x)+\dddd{y(0)}{\GGGG(1-\aaaa)x^\aaaa}.
$$
 The Caputo derivative is a suitable choice for a fractional derivative in fractional differential equations. Fractional differential equations is a growing field of mathematics with applications in  finance, bioengineering, control theory, quantum mechanics (Magin 2004; Wang ang Xu 2007; Monje et al. 2010; Zhang et al. 2016).
The finite difference schemes for numerical solution of fractional differential equations involve approximations for the fractional derivative.  Let $h=x/N$ and $y_\bbbb=y(\bbbb h)$ for $0\leq\bbbb\leq N$. The Gr\"{u}nwald-Letnikov difference approximation is a first-order approximation of the Riemann-Liouville derivative, when $y\in C^1[0,x]$ and it is a first-order approximation of the Caputo derivative when the function $y$ satisfies the condition  $y(0)=0$:
\begin{equation*}
A^{GL}_N[y(x)]=\dddd{1}{h^\aaaa}\sum_{k=0}^{N-1} (-1)^k\binom{\aaaa}{k}y(x-k h)=y^{(\aaaa)}(x)+O( h).
\end{equation*}
When the function $y\in C^2[0,x]$ and satisfies the condition $y(0)=y'(0)=0$, the  Gr\"{u}nwald-Letnikov  approximation is a second-order approximation for the Caputo derivative at the point $x-\aaaa h/2$:
\begin{equation}\label{L1}
\dddd{1}{h^\aaaa}\sum_{k=0}^{N-1} (-1)^k\binom{\aaaa}{k}y(x-k h)=y^{(\aaaa)}(x-\aaaa h/2)+O\llll( h^2\rrrr).
\end{equation}
The weights $w_k^{(\aaaa)}=(-1)^k\binom{\aaaa}{k}$ of the Gr\"{u}nwald-Letnikov  approximation involve the binomial coefficients defined as:
$$\binom{\aaaa}{k}=\dddd{\GGGG(\aaaa+1)}{\GGGG(k+1)\GGGG(\aaaa-k+1)}=\dddd{\aaaa(\aaaa-1)\cdots(\aaaa-k+1)}{k!}.
$$
The binomial coefficients satisfy the identities
$$(-1)^k\binom{\aaaa}{k}=\binom{k-\aaaa-1}{k}=\dddd{\GGGG(k-\aaaa)}{\GGGG(k+1)\GGGG(-\aaaa)}, \binom{\aaaa-1}{k-1}+\binom{\aaaa-1}{k}=\binom{\aaaa}{k},
$$
and the  gamma function satisfies the asymptotic formula (Podlubny 1999)
$$k^{b-a}\dddd{\GGGG(k+a)}{\GGGG(k+b)}=1+O\llll(k^{-1} \rrrr).
$$
The weights of the Gr\"{u}nwald-Letnikov approximation are the coefficients of the binomial series of the function $(1-x)^\aaaa$ and the coefficients of the right endpoint asymptotic expansion of the  Gr\"{u}nwald-Letnikov approximation are equal to the coefficients of the series expansion of the function $(1-e^{-x})^\aaaa/x^\aaaa$ at the point $x=0$:
$$\llll(\dddd{1-e^{-x}}{x} \rrrr)^\aaaa=\sum_{k=0}^\infty \dddd{B_k^{(-\aaaa)}(-\aaaa)}{k!}x^k,
$$
where $B_k^{(-\aaaa)}(x)$ are the generalized Bernoulli polynomials.When the function $y\in C^m[0,x]$ and satisfies the condition $y^{(k)}(0)=0$, for $k=0,1,\dots,m$, the Gr\"{u}nwald-Letnikov approximation has an asymptotic expansion of order $m$:
\begin{equation*}
\dddd{1}{h^\aaaa}\sum_{k=0}^{N-1} (-1)^k\binom{\aaaa}{k}y(x-k h)=y^{(\aaaa)}(x)+\sum_{k=1}^{m-1}\dddd{B_k^{(-\aaaa)}(-\aaaa)}{k!}y^{(k+\aaaa)}(x)h^k+O\llll( h^m\rrrr),
\end{equation*}
where $y^{(k+\aaaa)}(x)$ is the Caputo derivative of order $k+\aaaa$ of the function $y$
$$y^{(k+\aaaa)}(x)=\dddd{1}{\GGGG(1-\aaaa)}\int_0^x \dddd{y^{(k+1)}(t)}{(x-t)^\aaaa}dt.
$$
 The asymptotic expansion formula of the Gr\"{u}nwald-Letnikov approximation is obtained from the series expansion of the Fourier transform of the approximation. Lubich (1986) constructs higher-order approximations of the fractional derivative which are derived from the Fourier transform of the approximation and the properties of the generating function.
Second-order and trird-order approximations of the Caputo and Riemann-Louville fractional derivatives related to the Gr\"unwald-Letnikov difference approximation and their applications for numerical solution of fractional differential equations are studied  in  (Tadjeran et al. 2006; Dimitrov 2014; Vong and Wang 2014; Tian et al. 2015; Gao et al. 2015;  Ren and Wang 2017). High-order approximations of the fractional derivative whose generating function is related to the generating function  of the Gr\"{u}nwald-Letnikov approximation are discussed in  (Chen and Deng 2014, Ding and Li 2016, 2017). Another approach for constructing approximations of the Caputo derivative uses Lagrange interpolation of the function and computation of the fractional integrals on the stencils of the grid. The L1 approximation is an important and commonly used approximation of the Caputo derivative  (Zhuang and  Liu 2006; Lin and Xu 2007; Jin et al. 2016).
\begin{equation}\label{L2}
\dddd{1}{h^\aaaa}\sum_{n=0}^N \ssss_n^{(\aaaa)}y(x-n h)=y^{(\aaaa)}(x)+O\llll( h^{2-\aaaa}\rrrr),
\end{equation}
where $\ssss_0^{(\aaaa)}=1/\GGGG(2-\aaaa),\quad \ssss_N^{(\aaaa)}=\llll((N-1)^{1-\aaaa}-N^{1-\aaaa}\rrrr)/\GGGG(2-\aaaa)$ and
$$\ssss_k^{(\aaaa)}=\dddd{(k-1)^{1-\aaaa}-2k^{1-\aaaa}+(k+1)^{1-\aaaa}}{\GGGG(2-\aaaa)},\qquad(k=1,2,\dots,N-1).
$$
When the function $y\in C^2[0,x]$, the L1 approximation of the Caputo derivative has an accuracy $O\llll( h^{2-\aaaa}\rrrr)$. In (Dimitrov 2016) we obtain the second-order asymptotic expansion formula of the L1 approximation
\begin{equation}\label{L6}
\dddd{1}{h^\aaaa}\sum_{k=0}^N \ssss_k^{(\aaaa)} y(x-kh)=y^{(\aaaa)}(x)+\dddd{\zzzz(\aaaa-1)}{\GGGG(2-\aaaa)}y^{\prime\prime}(x) h^{2-\aaaa}+O\llll(h^2\rrrr)
\end{equation}
 and a second-order  approximation of the Caputo derivative
\begin{equation}\label{L7}
\dddd{1}{h^\aaaa}\sum_{n=0}^N \dddddd_k^{(\aaaa)}y(x-k h)=y^{(\aaaa)}(x)+O\llll( h^2\rrrr),
\end{equation}
where $\dddddd_k^{(\aaaa)}=\ssss_k^{(\aaaa)}$ for $3\leq k\leq N$ and
$$\dddddd_0^{(\aaaa)}=\ssss_0^{(\aaaa)}-\dddd{\zzzz(\aaaa-1)}{\GGGG(2-\aaaa)},\;\dddddd_1^{(\aaaa)}=\ssss_1^{(\aaaa)}+\dddd{2\zzzz(\aaaa-1)}{\GGGG(2-\aaaa)},\; \dddddd_2^{(\aaaa)}=\ssss_2^{(\aaaa)}-\dddd{\zzzz(\aaaa-1)}{\GGGG(2-\aaaa)}.$$
When $k>2$, the weights $\dddddd_k^{(\aaaa)}$ of approximation \eqref{L7} are equal to the weights of the L1 approximation  and  the first three weights   are modified with the value of the zeta function at the point $\aaaa-1$. The asymptotic expansions of order $2+\aaaa$ of the weights of the Gr\"{u}nwald-Letnikov and the L1 approximations of the Caputo derivative are obtained from the binomial series expansion formula and the asymptotic formula for the gamma function:
$$w_k^{(\aaaa)}\sim \dddd{1}{\GGGG(-\aaaa)k^{1+\aaaa}},\quad \ssss_k^{(\aaaa)}\sim \dddd{1}{\GGGG(-\aaaa)k^{1+\aaaa}}.
$$
 The L1 approximation is constructed by approximating the first derivative by its value at the midpoint of an uniform grid. Gao et al. (2014) and Alikhanov (2015) construct L$1-2$ and L$2-1_\ssss$ approximations of the Caputo derivative which have accuracy $O\llll(h^{3-\aaaa}\rrrr)$.  Higher-order approximations of the Caputo derivative, related to the construction of the L1 approximation are studied in (Li et al. 2011; Yan et al. 2014; Zheng et al. 2017). In the last two decades many of the methods used for numerical solution of ordinary and partial differential equations have been applied for numerical solution of fractional differential equation; which include spline collocations methods (Pedas and Tamme 2014), Petrov-Galerkin spectral methods (Zayernouri and  Karniadakis 2014), orthogonal Legandre and  Laguerre polynomials (Bhrawy et al. 2015, Ezz-Eldien et al. 2017), Adomian decomposition methods (El-Borai et al. 2015). 
In the present paper we study the asymptotic properties of the weights of the Gr\"{u}nwald-Letnikov difference approximation and approximations of the Caputo derivative related to the Gr\"{u}nwald-Letnikov approximation. The approximations  discussed in the paper are applied for construction of finite-difference schemes for numerical solution of ordinary fractional differential equations. The paper is organized as follows.
In section 2 and section 3  we construct approximations of the Caputo derivative which are obtained from the shifted Gr\"{u}nwald-Letnikov approximation and the L1 approximation of the Caputo derivative by replacing the weights whose index is greater than $\left\lceil N/5\right\rceil$ with the first two terms of their asymptotic expansions formulas
In section 4 we  obtain the second-order shifted approximation of the Caputo derivative:
	\begin{equation}\label{L8}
		\dddd{1}{h^\aaaa}\sum_{k=0}^n \gggg_k^{(\aaaa)}y_{n-k}=y^{(\aaaa)}_{n-\aaaa/2}+O\llll( h^{2}\rrrr),
		\end{equation}
where $\gggg_k^{(\aaaa)}=(-1)^k \binom{\aaaa}{k}$ for $0\leq k\leq n-2$ and
$$\gggg_{n-1}^{(\aaaa)}=(-1)^{n-2}\binom{\aaaa-2}{n-2}\dddd{n-2\aaaa}{1+\aaaa-n}+\dddd{(n-\aaaa/2)^{1-\aaaa}}{\GGGG(2-\aaaa)},$$
$$\gggg_{n}^{(\aaaa)}=(-1)^{n-2}\binom{\aaaa-2}{n-2}-\dddd{(n-\aaaa/2)^{1-\aaaa}}{\GGGG(2-\aaaa)}.$$
Approximation \eqref{L8} is obtained from the Gr\"{u}nwald-Letnikov approximation by modifying the last two weights. While the Gr\"{u}nwald-Letnikov approximation is a second-order shifted approximation of the Caputo derivative for the functions $y\in C^2[0,x_n]$ which satisfy the condition $y(0)=y'(0)=0$, approximation \eqref{L8} is a second-order approximation of the Caputo derivative $y^{(\aaaa)}_{n-\aaaa/2}$ for all function $y\in C^2[0,x_n]$. In section 5 we derive the expansion formulas of order $2+\aaaa$ of the weights $\gggg_{n-1}^{(\aaaa)}$ and $\gggg_{n}^{(\aaaa)}$ of approximation \eqref{L8}.
 \section{Asymptotic expansion formula for binomial coefficients and shifted approximations for the Caputo derivative}
The asymptotic expansion formula for the ratio of gamma functions  is studied in  (Tricomi and Erd\'{e}lyi 1951). The ratio of gamma functions satisfies:
\begin{equation}\label{RatioGamma}
\dddd{\GGGG(k+t)}{\GGGG(k+s)}=k^{t-s}\sum_{m=0}^{\infty} \dddd{(-1)^m B_m^{(t-s+1)}(t)(s-t)^{(m)}}{m!}\dddd{1}{k^m}.
\end{equation}
The generalized Bernoulli polynomials $B_m^{(\aaaa)}(x)$ are defined as the coefficients of the series expansion of the function $e^{x t} t^\aaaa/(e^t-1)^\aaaa$
$$e^{x t}\llll(\dddd{t}{e^t-1}\rrrr)^\aaaa=
\sum_{m=0}^\infty B_m^{(\aaaa)}(x)\dddd{t^m}{m!}.
$$
From \eqref{RatioGamma} with $t:=-\aaaa,s:=1$ we obtain the asymptotic expansion formula for the weights of the Gr\"{u}nwald-Letnikov approximation  (Elezovi\'{c} 2005)
$$w_k^{(\aaaa)}=(-1)^k\binom{\aaaa}{k}=\dddd{1}{\GGGG(-\aaaa)}
\sum_{m=0}^\infty \dddd{(-1)^m B_m^{(-\aaaa)}(-\aaaa)(\aaaa+1)^{(m)}}{m! k^{m+\aaaa+1}},
$$
where $(\aaaa+1)^{(m)}$ is the rising factorial
$$(\aaaa+1)^{(m)}=(\aaaa+1)(\aaaa+2)\cdots(\aaaa+m).
$$
The generalized Bernoulli polynomial $B_0^{(-\aaaa)}(-\aaaa)=1$ and
$$B_1^{(-\aaaa)}(-\aaaa)=-\frac{\aaaa}{2},B_2^{(-\aaaa)}(-\aaaa)=\frac{1}{12} \aaaa (1+3 \aaaa),B_3^{(-\aaaa)}(-\aaaa)=-\frac{1}{8} \aaaa^2 (1+\aaaa).
$$
When $\aaaa=1$ the generalized Bernoulli polynomials are equal to the Bernoulli polynomials. The gamma function satisfies the identity $\GGGG(x+1)=x\GGGG(x)$ and
$(-1)^m(\aaaa+1)^{(m)}/\GGGG(-\aaaa)=1/\GGGG(-m-\aaaa)$.
The weights $w_k^{(\aaaa)}$ of the Gr\"{u}nwald-Letnikov approximation have an asymptotic expansion of order $M+\aaaa+2$
\be\label{L9}
w_k^{(\aaaa)}=(-1)^k\binom{\aaaa}{k}=\sum_{m=0}^M \dddd{B_m^{(-\aaaa)}(-\aaaa)}{m!\GGGG(-m-\aaaa)}\dddd{1}{k^{m+\aaaa+1}}+O\llll(\dddd{1}{k^{M+\aaaa+2}}\rrrr).
\ee
From \eqref{L9} with $M=2$ we obtain the  asymptotic expansion of order $4+\aaaa$ of the weights of the Gr\"{u}nwald-Letnikov approximation:
\begin{align} \label{L10}
w_k^{(\aaaa)}=\dddd{1}{\GGGG(-\aaaa)k^{1+\aaaa}}-\dddd{\aaaa}{2\GGGG(-1-\aaaa)k^{2+\aaaa}}+\dddd{\aaaa(3\aaaa+1)}{24\GGGG(-2-\aaaa)k^{3+\aaaa}}+O\llll(\dddd{1}{k^{4+\aaaa}}\rrrr).
\end{align}
Now we  construct a second-order shifted approximation of the Caputo derivative by replacing the weights  of the Gr\"{u}nwald-Letnikov  approximation which have an index $k>\left\lceil N/5\right\rceil$ with the first two terms of expansion formula \eqref{L10}.
	\begin{equation*}\label{AE_GL}
	\bar{w}_k^{(\aaaa)}=\left\{
	\begin{array}{l l}
(-1)^k \binom{\aaaa}{k},& 0\leq k\leq \left\lceil N/5\right\rceil\\
\dddd{1}{\GGGG(-\aaaa)k^{1+\aaaa}}-\dddd{\aaaa}{2\GGGG(-1-\aaaa)k^{2+\aaaa}},&  \left\lceil N/5\right\rceil+1\leq k\leq N-1\\
	\end{array}
		\right .
	\end{equation*}
	In Theorem 1 we show that the approximation  with weights $\bar{w}_k^{(\aaaa)}$ is a second-order shifted approximation of the Caputo derivative.
	\begin{thm} Let $y\in C^2[0,x]$ and $y(0)=y'(0)=0$. Then
\begin{equation}\label{L11}
			\dddd{1}{h^\aaaa}\sum_{k=0}^N \bar{w}_k^{(\aaaa)}y(x-k h)=y^{(\aaaa)}(x-\aaaa h/2)+O\llll( h^{2}\rrrr).
			\end{equation}
	\end{thm}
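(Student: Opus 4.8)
The plan is to reduce \eqref{L11} to the second-order shifted Gr\"unwald--Letnikov expansion \eqref{L1} by showing that replacing the tail weights $w_k^{(\aaaa)}=(-1)^k\binom{\aaaa}{k}$ with their two-term truncation $\bar{w}_k^{(\aaaa)}$ perturbs the approximation only by $O(h^2)$. First I would record that $\bar{w}_k^{(\aaaa)}=w_k^{(\aaaa)}$ for $0\le k\le\left\lceil N/5\right\rceil$ and that the term $k=N$ contributes nothing because $y(x-Nh)=y(0)=0$; hence
\baa
\dddd{1}{h^\aaaa}\sum_{k=0}^N \bar{w}_k^{(\aaaa)}y(x-kh)
&=\dddd{1}{h^\aaaa}\sum_{k=0}^{N-1}(-1)^k\binom{\aaaa}{k}y(x-kh)\\
&\quad+\dddd{1}{h^\aaaa}\sum_{k=\left\lceil N/5\right\rceil+1}^{N-1}\bigl(\bar{w}_k^{(\aaaa)}-w_k^{(\aaaa)}\bigr)y(x-kh).
\eaa
By \eqref{L1} the first sum on the right equals $y^{(\aaaa)}(x-\aaaa h/2)+O(h^2)$, so it remains to show that the second sum is $O(h^2)$.

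For the indices in that range $\bar{w}_k^{(\aaaa)}$ is exactly the sum of the first two terms on the right-hand side of the order-$4+\aaaa$ expansion \eqref{L10}, so I would bound
$$\bigl|\bar{w}_k^{(\aaaa)}-w_k^{(\aaaa)}\bigr|=\left|\dddd{\aaaa(3\aaaa+1)}{24\GGGG(-2-\aaaa)k^{3+\aaaa}}+O\!\left(\dddd{1}{k^{4+\aaaa}}\right)\right|\le\dddd{C}{k^{3+\aaaa}}$$
for a constant $C=C(\aaaa)$ and all $k>\left\lceil N/5\right\rceil$. Combining this with $|y(x-kh)|\le\max_{[0,x]}|y|$ and comparing the tail sum with an integral gives
$$\sum_{k=\left\lceil N/5\right\rceil+1}^{N-1}\dddd{1}{k^{3+\aaaa}}\le\sum_{k>N/5}\dddd{1}{k^{3+\aaaa}}\le\int_{N/5}^{\infty}\dddd{dt}{t^{3+\aaaa}}=\dddd{(N/5)^{-(2+\aaaa)}}{2+\aaaa}=O\!\left(N^{-(2+\aaaa)}\right).$$
Since $h=x/N$ this is $O(h^{2+\aaaa})$, so the second sum is bounded in absolute value by $h^{-\aaaa}\,O(h^{2+\aaaa})=O(h^2)$, which together with the previous display proves \eqref{L11}.

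I do not anticipate a genuine obstacle; the single point that needs care is that the weights are altered only for indices $k>\left\lceil N/5\right\rceil$, that is, for $k$ comparable to $N$. This is precisely what makes $k^{-(3+\aaaa)}$ of size $O(h^{3+\aaaa})$, so that the accumulated truncation error, after multiplication by the prefactor $h^{-\aaaa}$, is still of order $h^2$; had low-index weights been modified their contribution would not be negligible. The fraction $1/5$ plays no special role --- any fixed proportion of $N$ works --- and the hypotheses $y\in C^2[0,x]$ and $y(0)=y'(0)=0$ enter only through \eqref{L1}, the correction estimate requiring merely that $y$ be bounded on $[0,x]$.
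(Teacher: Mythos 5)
Your proposal is correct and follows essentially the same route as the paper's own proof: bound the weight-truncation error by $C/k^{3+\aaaa}$ via the expansion \eqref{L10}, compare the tail sum with an integral to get $O(N^{-(2+\aaaa)})$, multiply by $h^{-\aaaa}$ to conclude the perturbation is $O(h^2)$, and invoke \eqref{L1}. Your explicit remark that the $k=N$ term vanishes because $y(0)=0$ is a small point the paper leaves implicit, but the argument is otherwise the same.
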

	\begin{proof} Let $M=\max_{t\in[0,x]}|y(t)|$ and $C>0$,  such that when $k>\left\lceil N/5\right\rceil$
	$$\llll|(-1)^k \binom{\aaaa}{k} -\llll( \dddd{1}{\GGGG(-\aaaa)k^{1+\aaaa}}-\dddd{\aaaa}{2\GGGG(-1-\aaaa)k^{2+\aaaa}}\rrrr) \rrrr|<\dddd{C}{k^{3+\aaaa}}.$$
	The difference $E_N[y(x)]$ between approximation \eqref{L11} and the Gr\"{u}nwald-Letnikov  approximation satisfies the estimate:
	\begin{align*}
	E_N[y&(x)]= \llll|\dddd{1}{h^\aaaa}\sum_{k=0}^{N-1} \bar{w}_k^{(\aaaa)}y(x-k h)-\dddd{1}{h^\aaaa}\sum_{k=0}^{N-1} (-1)^k \binom{\aaaa}{k}y(x-k h)
	\rrrr|\leq \\
	&\dddd{1}{h^\aaaa}\sum_{k=\left\lceil N/5\right\rceil+1}^{N-1} \llll|\bar{w}_k^{(\aaaa)}-(-1)^k \binom{\aaaa}{k}\rrrr||y(x-k h)|< \dddd{CM}{h^\aaaa}\sum_{k=\left\lceil  N/5\right\rceil+1}^N \dddd{1}{k^{3+\aaaa}}.
\end{align*}
The function $1/x^{3+\aaaa}$ is decreasing and
$$\sum_{k=\left\lceil N/5\right\rceil+1}^{\infty} \dddd{1}{k^{3+\aaaa}}<\int_{\left\lceil N/5\right\rceil}^{\infty}\dddd{1}{x^{3+\aaaa}}dx=\llll[-\dddd{1}{(2+\aaaa)x^{2+\aaaa}}  \rrrr]_{\left\lceil N/5\right\rceil
}^\infty<\dddd{1}{\left\lceil N/5\right\rceil^{2+\aaaa}} .
$$
Let $C_1=CM \llll(5/x\rrrr)^{2+\aaaa}$. Then
	\begin{align}\label{L12}
	 E_N[y(x)]\leq \dddd{CM}{h^\aaaa}\dddd{1}{( N/5)^{2+\aaaa}}=\dddd{CM}{h^\aaaa}\dddd{5^{2+\aaaa}}{N^{2+\aaaa}}=C_1 h^{2}.
\end{align}
From the triangle inequality
	\begin{align*}
	\Bigg|\dddd{1}{h^\aaaa}\sum_{k=0}^{N-1} \bar{w}_k^{(\aaaa)}y(x-\aaaa h/2)-y&^{(\aaaa)}(x-k h)\Bigg|<\\
&\Big|A^{GL}_N[y(x)]-y^{(\aaaa)}(x-\aaaa h/2)\Big|+E_N[y(x)].
\end{align*}
From \eqref{L1} and \eqref{L12}
$$\dddd{1}{h^\aaaa}\sum_{k=0}^{N-1} \bar{w}_k^{(\aaaa)}y(x-kh)=y^{(\aaaa)}(x-\aaaa h/2)+O\llll(h^2 \rrrr).
$$
\qed
	\end{proof}
	The result of Theorem 1 can be generalized to the approximations for the Caputo derivative which are obtained from any approximation by modifying the weights which have an index greater than $\left\lceil N/p\right\rceil$  with the first  terms of their expansion formulas, where $p$ is a positive number.  When the function $y\in C^2[0,x_n]$ and satisfies the condition $y(0)=y'(0)=0$  the Gr\"{u}nwald-Letnikov approximation has a second-order  expansion formula:
\be\label{L13}
\dddd{1}{h^\aaaa}\sum_{k=0}^{n-1}(-1)^k\binom{\aaaa}{k}y_{n-k}=y^{(\aaaa)}_n-\dddd{\aaaa}{2}y^{(1+\aaaa)}_n h+O\llll(h^2 \rrrr)=y^{(\aaaa)}_{n-\aaaa/2}+O\llll(h^2 \rrrr).
\ee
	In (Dimitrov 2018) we derive the  expansion formula for the right endpoint of the approximation for the Caputo derivative which has weights $k^{-1-\aaaa}/\GGGG(-\aaaa)$. When the function $y\in C^2[0,x]$ and satisfies the condition $y(0)=y'(0)=0$, the approximation has an asymptotic expansion of order $3-\aaaa$
\begin{align}\label{L14}
\dddd{1}{h^\aaaa}\sum_{k=1}^{n-1} \dddd{y_{n-k}}{\GGGG(-\aaaa)k^{1+\aaaa}}=y^{(\aaaa)}_n+&\dddd{\zzzz(\aaaa+1)}{\GGGG(-\aaaa)}y_nh^{-\aaaa}-\\
&\dddd{\zzzz(\aaaa)}{\GGGG(-\aaaa)}y'_n h^{1-\aaaa}+\dddd{\zzzz(\aaaa-1)}{2\GGGG(-\aaaa)}y''_n h^{2-\aaaa}+O\llll(h^{3-\aaaa}\rrrr).\nonumber
\end{align}
Expansion formula \eqref{L14} is obtained by applying formal integration by parts to the fractional integral in the definition of the Caputo derivative and a Fourier transform to the approximation. The expansion formula for the left endpoint of approximation \eqref{L14} is obtained from the Euler-Mclaurin formula for the function $y(t)/(x-t)^{1+\aaaa}$. By substituting $\aaaa:=\aaaa+1$ in \eqref{L14} we obtain
\begin{align*}
\dddd{1}{h^{1+\aaaa}}\sum_{k=1}^{n-1} \dddd{y_{n-k}}{\GGGG(-1-\aaaa)k^{2+\aaaa}}=y^{(1+\aaaa)}_n+&\dddd{\zzzz(\aaaa+2)}{\GGGG(-1-\aaaa)}y_n h^{-1-\aaaa}-\\
 \dddd{\zzzz(1+\aaaa)}{\GGGG(-1-\aaaa)}y'_n h^{-\aaaa}&+\dddd{\zzzz(\aaaa)}{2\GGGG(-1-\aaaa)}y''_n h^{1-\aaaa}+O\llll(h^{2-\aaaa}\rrrr),\nonumber
\end{align*}
\begin{align}\label{L15}
\dddd{1}{h^{\aaaa}}\sum_{k=1}^{n-1} \dddd{y_{n-k}}{\GGGG(-1-\aaaa)k^{2+\aaaa}}=h y^{(1+\aaaa)}_n+&\dddd{\zzzz(\aaaa+2)}{\GGGG(-1-\aaaa)}y_n h^{-\aaaa}-\\
\qquad \dddd{\zzzz(1+\aaaa)}{\GGGG(-1-\aaaa)}y'_n h^{1-\aaaa}&+\dddd{\zzzz(\aaaa)}{2\GGGG(-1-\aaaa)}y''_n h^{2-\aaaa}+O\llll(h^{3-\aaaa}\rrrr).\nonumber
\end{align}
By multiplying \eqref{L15} by $-\aaaa/2$ and adding to \eqref{L14} we obtain
\begin{align}\label{L16}
&\dddd{1}{h^{\aaaa}}\sum_{k=1}^{n-1}\llll( \dddd{1}{\GGGG(-\aaaa)k^{1+\aaaa}}-\dddd{\aaaa}{2\GGGG(-1-\aaaa)k^{2+\aaaa}}\rrrr)y_{n-k}=y^{(\aaaa)}_n-\dddd{\aaaa h}{2} y^{(1+\aaaa)}_n+\nonumber\\
& \llll(\dddd{\zzzz(\aaaa+1)}{\GGGG(-\aaaa)}-\dddd{\aaaa\zzzz(\aaaa+2)}{2\GGGG(-1-\aaaa)}\rrrr)y_n h^{-\aaaa}-
\llll(\dddd{\zzzz(\aaaa)}{\GGGG(-\aaaa)}-\dddd{\aaaa\zzzz(\aaaa+1)}{2\GGGG(-1-\aaaa)}\rrrr)y'_n h^{1-\aaaa}+\nonumber\\
&\quad \dddd{1}{2}\llll(\dddd{\zzzz(\aaaa-1)}{\GGGG(-\aaaa)}-\dddd{\aaaa\zzzz(\aaaa)}{2\GGGG(-1-\aaaa)}\rrrr)y''_n h^{2-\aaaa}+O\llll(h^{3-\aaaa}\rrrr).
\end{align}
By substituting $y'_n=(y_n-y_{n-1})/h+O\llll(h\rrrr)$ in \eqref{L16} we obtain
\begin{align}\label{L17}
\dddd{1}{h^{\aaaa}}\sum_{k=1}^{n-1}&\llll(\dddd{1}{\GGGG(-\aaaa)k^{1+\aaaa}}-\dddd{\aaaa}{2\GGGG(-1-\aaaa)k^{2+\aaaa}}\rrrr)y_{n-k}=y^{(\aaaa)}_n-\dddd{\aaaa h}{2} y^{(1+\aaaa)}_n-\nonumber\\
& \dddd{1}{h^\aaaa}\llll(\dddd{\zzzz(\aaaa)}{\GGGG(-\aaaa)}-\dddd{\aaaa\zzzz(\aaaa+1)}{2\GGGG(-1-\aaaa)}\rrrr)(y_n-y_{n-1})+\\
&\dddd{1}{h^\aaaa}\llll(\dddd{\zzzz(\aaaa+1)}{\GGGG(-\aaaa)}-\dddd{\aaaa\zzzz(\aaaa+2)}{2\GGGG(-1-\aaaa)}\rrrr)y_n+O\llll(h^{2-\aaaa}\rrrr).\nonumber
\end{align}
From \eqref{L17}  we obtain the shifted approximation for the Caputo derivative of order $2-\aaaa$:
\begin{equation}\label{L18}
			\dddd{1}{h^\aaaa}\sum_{k=0}^{n-1} \widetilde{w}_k^{(\aaaa)}y_{n-k}=y^{(\aaaa)}_n-\dddd{\aaaa}{2}y^{(1+\aaaa)}_n h+O\llll(h^{2 -\aaaa}\rrrr)=y^{(\aaaa)}_{n-\aaaa/2}+O\llll(h^{2 -\aaaa}\rrrr).
			\end{equation}
where
$$\widetilde{w}_0^{(\aaaa)}=\dddd{1}{\GGGG(-\aaaa)}\llll(\zzzz(\aaaa)+\dddd{1}{2}(\aaaa-1)(\aaaa+2)\zzzz(\aaaa+1)-\dddd{1}{2}\aaaa(\aaaa+1)\zzzz(\aaaa+2)\rrrr),$$
						$$\widetilde{w}_1^{(\aaaa)}=\dddd{1}{\GGGG(-\aaaa)}\llll(\dddd{1}{2}(\aaaa^2+\aaaa+2)-\zzzz(\aaaa)-\dddd{1}{2}\aaaa(\aaaa+1)\zzzz(\aaaa+1)\rrrr),$$
			$$\widetilde{w}_k^{(\aaaa)}=\dddd{1}{\GGGG(-\aaaa)k^{1+\aaaa}}-\dddd{\aaaa}{2\GGGG(-1-\aaaa)k^{2+\aaaa}},\quad (k=2,\dots,n-1).$$
		When the function $y\in C^2[0,x_n]$ and satisfies the condition $y(0)=y'(0)=0$ approximation \eqref{L18} an approximation for $y^{(\aaaa)}_{n-\aaaa/2}$  with an accuracy $O\llll(h^{2-\aaaa} \rrrr)$. The weights of approximation \eqref{L18} are equal to the first two terms in the asymptotic expansion formula of the weights of the Gr\"{u}nwald-Letnikov approximation  $(-1)^k\binom{\aaaa}{k}$, when $k\geq 2$. The first two terms of the second-order expansion of the Gr\"{u}nwald-Letnikov approximation and the expansion formula  of order $2-\aaaa$ of approximation \eqref{L18} are equal.
			
By substituting $y''_n=\dddd{1}{h^2}\llll(y_n-2y_{n-1}+y_{n-2}\rrrr)/h^2+O\llll(h\rrrr)$ and
			$$y'_n=\dddd{1}{h}\llll(\dddd{3}{2}y_n-2y_{n-1}+\dddd{1}{2}y_{n-2}\rrrr)+O\llll(h^2\rrrr),$$
	in \eqref{L16}		we obtain the shifted approximation for the Caputo derivative:
\begin{equation}\label{L19}
			\dddd{1}{h^\aaaa}\sum_{k=0}^{n-1} \widehat{w}_k^{(\aaaa)}y_{n-k}=y^{(\aaaa)}_{n-\aaaa/2}+O\llll(h^{2}\rrrr)=y^{(\aaaa)}_n-\dddd{\aaaa}{2}y^{(1+\aaaa)}_n h+O\llll(h^{3 -\aaaa}\rrrr),
			\end{equation}
			where
	\begin{align*}					
					\widehat{w}_0^{(\aaaa)}=	-\dddd{1}{4 \GGGG(-\aaaa)}(2\zzzz(-1+&\aaaa)+(\aaaa+3)(\aaaa-2)\zzzz(\aaaa)-\\
					&(3\aaaa^2+3\aaaa-4)\zzzz(\aaaa+1)+2\aaaa (\aaaa+1)\zzzz(2+\aaaa)),
\end{align*}
	\begin{align*}
						\widehat{w}_1^{(\aaaa)}=\dddd{1}{2\GGGG(-\aaaa)}(2+\aaaa+\aaaa^2+2\zzzz&(-1+\aaaa)+\\
						&\left(\aaaa+\aaaa^2-4\right) \zzzz(\aaaa)-2\aaaa (\aaaa+1)\zzzz(1+\aaaa)),
 \end{align*}
	\begin{align*}
						\widehat{w}_2^{(\aaaa)}=\dddd{1}{4\GGGG(-\aaaa)}\Big(\frac{4+\aaaa+\aaaa^2}{2^{1+\aaaa}}-2&\zzzz(-1+\aaaa)-\\
						&\left(\aaaa^2+\aaaa-2\right)\zzzz(\aaaa)+\aaaa (1+\aaaa)\zzzz(1+\aaaa)\Big),
 \end{align*}
			$$\widehat{w}_k^{(\aaaa)}=\dddd{1}{\GGGG(-\aaaa)k^{1+\aaaa}}-\dddd{\aaaa}{2\GGGG(-1-\aaaa)k^{2+\aaaa}},\quad (k=3,\dots,n-1).$$
			When the function $y\in C^2[0,x_n]$ and satisfies the condition $y(0)=y'(0)=0$ approximation \eqref{L19} is a second-order approximation for the Caputo derivative at the point $x_n-\aaaa h/2$. The weights of approximation \eqref{L19} are equal to the first two terms in the expansion formula of $(-1)^k\binom{\aaaa}{k}$ for $k\geq 3$. The first two terms of the second-order asymptotic expansion of the Gr\"{u}nwald-Letnikov approximation are equal to the first two terms of the expansion of order $3-\aaaa$ of approximation \eqref{L19}.
	\section{Expansion formula for the weights of the L1 approximation}
 From the binomial formula:
$$(k+1)^{1-\aaaa}=k^{1-\aaaa}\llll(1+\dddd{1}{k}\rrrr)^{1-\aaaa}=n^{1-\aaaa}\llll(\sum_{m=0}^5\binom{1-\aaaa}{m}\dddd{1}{k^m}+O\llll(\dddd{1}{k^6}\rrrr)\rrrr),
$$
\be\label{L20}
(k+1)^{1-\aaaa}=k^{1-\aaaa}+\sum_{m=1}^5\binom{1-\aaaa}{m}\dddd{1}{k^{m+\aaaa-1}}+O\llll(\dddd{1}{k^{5+\aaaa}}\rrrr).
\ee
Similarly
\be\label{L21}
(k-1)^{1-\aaaa}=k^{1-\aaaa}+\sum_{m=1}^5(-1)^m \binom{1-\aaaa}{m}\dddd{1}{k^{m+\aaaa-1}}+O\llll(\dddd{1}{k^{5+\aaaa}}\rrrr).
\ee
From \eqref{L20} and \eqref{L21}
$$(k-1)^{1-\aaaa}-2k^{1-\aaaa}+(k+1)^{1-\aaaa}=2\binom{1-\aaaa}{2}\dddd{1}{k^{1+\aaaa}}+2\binom{1-\aaaa}{4}\dddd{1}{k^{3+\aaaa}}+O\llll(\dddd{1}{k^{5+\aaaa}}\rrrr),
$$
$$\GGGG(2-\aaaa)\ssss_k^{(\aaaa)}=-\dddd{(1-\aaaa)\aaaa}{k^{1+\aaaa}}+\dddd{(1-\aaaa)(-\aaaa)(-\aaaa-1)(-\aaaa-2)}{12k^{3+\aaaa}}+O\llll(\dddd{1}{k^{5+\aaaa}}\rrrr).
$$
The weights $\ssss_k^{(\aaaa)}$ of the L1 approximation have an asymptotic expansion of order $5+\aaaa$, for $1\leq k\leq n-1$
\be\label{L22}
\ssss_k^{(\aaaa)}=\dddd{1}{\GGGG(-\aaaa)k^{1+\aaaa}}+\dddd{1}{12\GGGG(-2-\aaaa)k^{3+\aaaa}}+O\llll(\dddd{1}{k^{5+\aaaa}}\rrrr).
\ee
The last weight $\ssss_n^{(\aaaa)}$ of the L1 approximation \eqref{L2} satisfies
$$\GGGG(2-\aaaa)\ssss_n^{(\aaaa)}=(n-1)^{1-\aaaa}-n^{1-\aaaa}.$$
 From \eqref{L21}
$$\GGGG(2-\aaaa)\ssss_n^{(\aaaa)}=-\binom{1-\aaaa}{1}\dddd{1}{n^\aaaa}+\binom{1-\aaaa}{2}\dddd{1}{n^{1+\aaaa}}-\binom{1-\aaaa}{3}\dddd{1}{n^{2+\aaaa}}+O\llll(\dddd{1}{n^{3+\aaaa}}\rrrr),
$$
\be\label{L23}
\ssss_n^{(\aaaa)}=-\dddd{1}{\GGGG(1-\aaaa)n^{\aaaa}}+\dddd{1}{2\GGGG(-\aaaa)n^{1+\aaaa}}-\dddd{1}{6\GGGG(-1-\aaaa)n^{2+\aaaa}}+O\llll(\dddd{1}{n^{3+\aaaa}}\rrrr).
\ee
Similarly to the construction of approximation \eqref{L11} we use  asymptotic expansion formulas \eqref{L22} and \eqref{L23} to obtain approximations of the Caputo derivative by modifying the weights $\ssss_k^{(\aaaa)}$ of the L1 approximation  and the second order approximation \eqref{L7}  when $k>\left\lceil N/5 \right\rceil$.
\be\label{L24}
\dddd{1}{h^\aaaa}\sum_{k=0}^n \bar{\ssss}_k^{(\aaaa)}y_{n-k}=y^{(\aaaa)}_n+O\llll( h^{2-\aaaa}\rrrr),
\ee
where $\bar{\ssss}_0^{(\aaaa)}=1/\GGGG(2-\aaaa)$ and
	\begin{equation}\label{L25}
	\bar{\ssss}_k^{(\aaaa)}=\left\{
	\begin{array}{l l}
\dddd{(k-1)^{1-\aaaa}-2k^{1-\aaaa}+(k+1)^{1-\aaaa}}{\GGGG(2-\aaaa)},& 1\leq k\leq \left\lceil N/5\right\rceil,\\
\dddd{1}{\GGGG(-\aaaa)k^{1+\aaaa}}+\dddd{\aaaa}{12\GGGG(-2-\aaaa)k^{3+\aaaa}},&  \left\lceil N/5\right\rceil< k\leq N-1.\\
	\end{array}
		\right .
	\end{equation}
	When $ n\leq \left\lceil N/5\right\rceil $ the last weight $\bar{\ssss}_n^{(\aaaa)}$ of approximation \eqref{L24} is equal to the last weight $\ssss_n^{(\aaaa)}$ of the L1 approximation. When $n>\left\lceil N/5\right\rceil$ the value of the weight $\bar{\ssss}_n^{(\aaaa)}$ is equal to the terms of expansion formula \eqref{L23} of  $\ssss_n^{(\aaaa)}$.
		\begin{equation}\label{L26}
	\bar{\ssss}_n^{(\aaaa)}=\left\{
	\begin{array}{l l}
\ssss_n^{(\aaaa)}=\dddd{(n-1)^{1-\aaaa}-n^{1-\aaaa}}{\GGGG(2-\aaaa)},& n\leq \left\lceil N/5\right\rceil,\\
-\dddd{n^{-\aaaa}}{\GGGG(1-\aaaa)}+\dddd{n^{-\aaaa-1}}{2\GGGG(-\aaaa)}-\dddd{n^{-\aaaa-2}}{6\GGGG(-1-\aaaa)},&  n>\left\lceil N/5\right\rceil.\\
	\end{array}
		\right .
	\end{equation}
From approximation \eqref{L7} we obtain the second-order approximation of the Caputo derivative
\be\label{L27}
\dddd{1}{h^\aaaa}\sum_{k=0}^n \bar{\dddddd}_k^{(\aaaa)}y_{n-k}=y^{(\aaaa)}_n+O\llll( h^{2}\rrrr),
\ee
where 	the  weights $\bar{\dddddd}_k^{(\aaaa)}=\bar{\ssss}_k^{(\aaaa)}$ for $k\geq 3$ are defined with \eqref{L25}, \eqref{L26}  and $\bar{\dddddd}_k^{(\aaaa)}=\dddddd_k^{(\aaaa)}$ for $k=0,1,2$: $\bar{\dddddd}_0^{(\aaaa)}=(1-\zzzz(\aaaa-1))/\GGGG(2-\aaaa)$,
$$ \bar{\dddddd}_1^{(\aaaa)}=\dddd{2-2^{1-\aaaa}+2\zzzz(\aaaa-1)}{\GGGG(2-\aaaa)},\; \bar{\dddddd}_2^{(\aaaa)}=\dddd{3^{1-\aaaa}-2^{-\aaaa}+1-\zzzz(\aaaa-1)}{\GGGG(2-\aaaa)}.$$
When the function $y\in C^2[0,x_n]$, approximations \eqref{L24}and \eqref{L27} have accuracy $O\llll( h^{2-\aaaa}\rrrr)$ and $O\llll( h^{2}\rrrr)$. The proof is similar to the proof of Theorem 1.

The fractional integral of order $\aaaa>0$ is defined as
$$I^\aaaa y(x)=y^{(-\aaaa)}(x)=\dddd{}{}\int_0^x (x-t)^{\aaaa-1}y(t)dt.$$

The fractional integral $I^\aaaa y(x)$ is the fractional derivative of the function $y(x)$ of order $-\aaaa$. The Riemann sum approximation of the fractional integral has an expansion of order $1+\aaaa$
$$h^\aaaa \sum_{k=1}^{n-1}\dddd{y_{n-k}}{\GGGG(\aaaa)k^{1-\aaaa}}= y_n^{(-\aaaa)}+\dddd{\zzzz(1-\aaaa)}{\GGGG(\aaaa)}y_n h^\aaaa+O\llll(h^{1+\aaaa} \rrrr),
$$
when $y(0)=0$. From the approximations discussed in the paper and other approximations for the fractional derivatives and integrals we observe that the weights of the approximation
 $h^{-\aaaa}\sum_{k=0}^{n}w_k^{(\aaaa)}y(x-kh)$ of the fractional derivative $y^{(\aaaa)}(x)$ satisfy $w_k^{(\aaaa)}\sim \dddd{c_k}{k^{1+\aaaa}}$ and weights of the approximations of the definite integral satisfy $w_k^{(-1)}\sim c_k$. This property holds for the trapezoidal approximation, Simpson's  approximation, and the quadrature formulas discussed in (Dimitrov, Miryanov and Todorov 2017).
	\section{Shifted Gr\"{u}nwald-Letnikov difference approximation}
In this section we use the method from (Dimitrov 2018) to derive an approximation \eqref{L8} for the Caputo derivative, which is obtained from the Gr\"{u}nwald-Letnikov approximation  by modifying the last two weights.  Approximation \eqref{L8} is a second-order shifted approximation for the Caputo derivative  $y^{(\aaaa)}_{n-\aaaa/2}$ for all functions $y\in C^2[0,x_n]$. Let $y_0=y(0),y'_0=y'(0)$ and
$$y(x)=y_0+y'_0 x+z(x).$$
  	The function  $z(x)$ satisfies the condition $z(0)=z'(0)=0$.
		$$A^{GL}_N [y(x)]=y_0 A^{GL}_N [1]+y'_0 A^{GL}_N [x]+ A^{GL}_N [z(x)],$$
		$$A^{GL}_N [y(x)]=y_0 A^{GL}_N [1]+y'_0 A^{GL}_N [x]+z^{(\aaaa)}\llll(x-\aaaa h/2 \rrrr)+O\llll( h^2\rrrr).$$
	The Caputo derivative of the functions $y$ and $z$ satisfies
	$$y^{(\aaaa)}\llll(x-\aaaa h/2 \rrrr)=\dddd{y'_0}{\GGGG(2-\aaaa)}(x-\aaaa h/2)^{1-\aaaa}+z^{(\aaaa)}\llll(x-\aaaa h/2 \rrrr).$$
Then
$$A^{GL}_N [y(x)]=y_0 A^{GL}_h [1]+y'_0 A^{GL}_h [x]-\dddd{y'_0 (x-\aaaa h/2)^{1-\aaaa}}{\GGGG(2-\aaaa)}+y^{(\aaaa)}\llll(x-\aaaa h/2 \rrrr)+O\llll( h^2\rrrr),$$
\begin{align}\label{L29}
A^{GL}_N [y(x)]=y_0 A^{GL}_N [1]+&\llll( A^{GL}_N [x]-\dddd{ (x-\aaaa h/2)^{1-\aaaa}}{\GGGG(2-\aaaa)}\rrrr)y'_0 +\\
&\qquad\qquad\qquad\qquad y^{(\aaaa)}\llll(x-\aaaa h/2 \rrrr)+O\llll( h^2\rrrr).\nonumber
\end{align}
	Denote $W_N^0=h^\aaaa A^{GL}_N [1]=\sum_{k=0}^{N-1}w_k^{(\aaaa)}$ and
	$$W_N^1=\dddd{1}{h^{1-\aaaa}}\llll(A^{GL}_h [x]-\dddd{ (x-\aaaa h/2)^{1-\aaaa}}{\GGGG(2-\aaaa)}\rrrr),$$
	$$W_N^1=\dddd{1}{h^{1-\aaaa}}\llll(\dddd{1}{h^\aaaa}\sum_{k=0}^{N-1}w_k^{(\aaaa)}(x-k h)-\dddd{h^{1-\aaaa}}{\GGGG(2-\aaaa)}(N-\aaaa/2)^{1-\aaaa}\rrrr),$$
\be\label{L30}
				W_N^1=N W_N^0-\sum_{k=0}^{N-1}k w_k^{(\aaaa)}-\dddd{(N-\aaaa/2)^{1-\aaaa}}{\GGGG(2-\aaaa)}.
\ee
In Lemma 2 we show that $W_N^1=O\llll(h^{1+\aaaa} \rrrr)$. From \eqref{L29}, the Gr\"{u}nwald-Letnikov approximation satisfies:
				$$A^{GL}_N [y(x)]=y^{(\aaaa)}\llll(x-\aaaa h/2 \rrrr)+\dddd{1}{h^\aaaa}\llll( W_N^0 y_0+W_N^1 y'_0 h\rrrr)+O\llll( h^2\rrrr).$$
	By substituting $ h y'_0=y_1-y_0+O\llll(h^2\rrrr)$ we obtain
$$A^{GL}_N [y(x)]=y^{(\aaaa)}\llll(x-\aaaa h/2 \rrrr)+\dddd{1}{h^\aaaa}\llll( W_N^0 y_0+W_N^1 (y_1-y_0)\rrrr)+O\llll( h^2\rrrr).$$
Denote by $\bar{A}^{GL}_N [y(x)]$ the approximation of the Caputo derivative
	$$\bar{A}^{GL}_N [y(x)]=A^{GL}_N [y(x)]-\dddd{1}{h^\aaaa}\llll( W_N^0 y_0+W_N^1 (y_1-y_0)\rrrr).$$	
	Approximation $\bar{A}^{GL}_N [y(x)]$  	is obtained from the Gr\"{u}nwald-Letnikov approximation by modifying the last two weights.
	$$\bar{A}^{GL}_N [y(x)]=\dddd{1}{h^\aaaa}\sum_{k=0}^{N}\gggg_k^{(\aaaa)}y(x-kh)=y^{(\aaaa)}(x-\aaaa h/2)+O\llll( h^2\rrrr),$$
where $\gggg_k^{(\aaaa)}=w_k^{(\aaaa)}=(-1)^k \binom{\aaaa}{k}$ for $k=0,1,\dots,N-2$ and
	$$\gggg_{N-1}^{(\aaaa)}=w_{N-1}^{(\aaaa)}-W_N^1=(-1)^{N-1}\binom{\aaaa}{N-1}-\sum_{k=0}^{N-1}(N-k) w_k^{(\aaaa)}+\dddd{(N-\aaaa/2)^{1-\aaaa}}{\GGGG(2-\aaaa)},$$
		$$\gggg_{N-1}^{(\aaaa)}=-\sum_{k=0}^{N-2} (-1)^k(N-k) \binom{\aaaa}{k}+\dddd{(N-\aaaa/2)^{1-\aaaa}}{\GGGG(2-\aaaa)},$$
		$$\gggg_{N}^{(\aaaa)}=W_N^1-W_N^0=\sum_{k=0}^{N-2}(N-k-1) w_k^{(\aaaa)}-\dddd{(N-\aaaa/2)^{1-\aaaa}}{\GGGG(2-\aaaa)}-w_{N-1}^{(\aaaa)}.$$
	By induction we can show that the  weights of the Gr\"{u}nwald-Letnikov approximation satisfy the identities (Podlubny 1999)
\be\label{L31}
W_N^0=\sum_{k=0}^{N-1} w_{k}^{(\aaaa)}=\sum_{k=0}^{N-1} (-1)^k \binom{\aaaa}{k}=(-1)^{N-1} \binom{\aaaa-1}{N-1}=w_{N-1}^{(\aaaa-1)},
\ee			
\be\label{L32}
\sum_{k=1}^{N-1} (-1)^k k\binom{\aaaa}{k}=-\aaaa \sum_{k=1}^{N-1} (-1)^{k-1} \binom{\aaaa-1}{k-1}=-\aaaa w_{N-2}^{(\aaaa-2)}.
\ee
		From \eqref{L30}, \eqref{L31} and \eqref{L32}
		$$W_N^1=N w_{N-1}^{(\aaaa-1)}+\aaaa w_{N-2}^{(\aaaa-2)}-\-\dddd{(N-\aaaa/2)^{1-\aaaa}}{\GGGG(2-\aaaa)},$$
\begin{align*}
N w_{N-1}^{(\aaaa-1)}+\aaaa w_{N-2}^{(\aaaa-2)}&=N(-1)^{N-1}\binom{\aaaa-1}{N-1}+\aaaa(-1)^{N-2}\binom{\aaaa-2}{N-2}=\\
&(-1)^{N-2}\llll(-\dddd{N(\aaaa-1)}{N-1}\binom{\aaaa-2}{N-2} +\aaaa\binom{\aaaa-2}{N-2}   \rrrr)=\\
&(-1)^{N-2}\binom{\aaaa-2}{N-2}\dddd{N-\aaaa}{N-1}=w_{N-1}^{(\aaaa-2)}.
\end{align*}
Hence
		$$W_N^1= w_{N-1}^{(\aaaa-2)}-\-\dddd{(N-\aaaa/2)^{1-\aaaa}}{\GGGG(2-\aaaa)}.$$
		The weight $\gggg_{N}^{(\aaaa)}$ of  satisfies
	$$\gggg_N^{(\aaaa)}=W_N^1-W_N^0= w_{N-1}^{(\aaaa-2)}- w_{N-1}^{(\aaaa-1)}-\dddd{(N-\aaaa/2)^{1-\aaaa}}{\GGGG(2-\aaaa)}.
	$$
From the properties of the binomial coefficients
\begin{align*}
w_{N-1}^{(\aaaa-2)}- w_{N-1}^{(\aaaa-1)}=
 (-1)^{N-1} \llll( \binom{\aaaa-2}{N-1}-\binom{\aaaa-1}{N-1}\rrrr)=(-1)^{N-2} \binom{\aaaa-2}{N-2}.
\end{align*}		
Hence
	$$\gggg_N^{(\aaaa)}=w_{N-2}^{(\aaaa-2)}-\dddd{(N-\aaaa/2)^{1-\aaaa}}{\GGGG(2-\aaaa)}.
	$$
	The weight $\gggg_{N-1}^{(\aaaa)}$  satisfies
		$$\gggg_{N-1}^{(\aaaa)}=w_{N-1}^{(\aaaa)}-W_N^1=w_{N-1}^{(\aaaa)}- w_{N-1}^{(\aaaa-2)}+\dddd{(N-\aaaa/2)^{1-\aaaa}}{\GGGG(2-\aaaa)},
	$$
	\begin{align*}
w_{N-1}^{(\aaaa)}- &w_{N-1}^{(\aaaa-2)}=(-1)^{N-1}\llll( \binom{\aaaa}{N-1}- \binom{\aaaa-2}{N-1}\rrrr)=\\
&(-1)^{N-1}\llll( \binom{\aaaa-2}{N-2}\dddd{\aaaa(\aaaa-1)}{(N-1)(\aaaa-N+1)}- \binom{\aaaa-2}{N-1}\dddd{\aaaa-N}{N-1}\rrrr)=\\
&(-1)^{N-2}\binom{\aaaa-2}{N-2}\dddd{N-2\aaaa}{1+\aaaa-N}.
\end{align*}
Hence
		$$\gggg_{N-1}^{(\aaaa)}=\dddd{N-2\aaaa}{1+\aaaa-N}w_{N-2}^{(\aaaa-2)}+\dddd{(N-\aaaa/2)^{1-\aaaa}}{\GGGG(2-\aaaa)}.
	$$	
 Approximation  $\bar{A}^{GL}_n [y(x)]$ is a second-order shifted approximation for the Caputo derivative, when the function $y\in C^2[0,x_n]$:
	\begin{equation}\label{L33}
		\dddd{1}{h^\aaaa}\sum_{k=0}^n \gggg_k^{(\aaaa)}y_{n-k}=y^{(\aaaa)}_{n-\aaaa/2}+O\llll( h^{2}\rrrr),
		\end{equation}
where  $\gggg_k^{(\aaaa)}=w_{k}^{(\aaaa)}=(-1)^k \binom{\aaaa}{k}$ for $0\leq k\leq n-2$  and
		$$\gggg_{n-1}^{(\aaaa)}=\dddd{n-2\aaaa}{1+\aaaa-n}w_{n-2}^{(\aaaa-2)}+\dddd{(n-\aaaa/2)^{1-\aaaa}}{\GGGG(2-\aaaa)}, \gggg_n^{(\aaaa)}=w_{n-2}^{(\aaaa-2)}-\dddd{(n-\aaaa/2)^{1-\aaaa}}{\GGGG(2-\aaaa)}.
	$$	
		\section{Asymptotic expansion formulas }
In this section we obtain the asymptotic expansions of $W_n^0= w_{n-1}^{(\aaaa-1)},W_n^1$ and the last two weights $\gggg_{n-1}^{(\aaaa)}$ and $\gggg_{n}^{(\aaaa)}$ of approximation \eqref{L33}, and we construct an approximation of the Caputo derivative by modifying the weights of approximation \eqref{L33} which have an index greater than $\left\lceil N/5 \right\rceil$  with the first terms of their asymptotic expansions.
\begin{lem}
\begin{align*}W_n^0=\dddd{1}{\GGGG(1- \aaaa) n^\aaaa}-\dddd{\aaaa+1}{2\GGGG(-\aaaa)n^{1+\aaaa}}+\dddd{(2+\aaaa)(1+3\aaaa)}{24\GGGG(-1-\aaaa)n^{2+\aaaa}}+O\llll(\dddd{1}{n^{3+\aaaa}}\rrrr).
\end{align*}
$$W_n^1=\dddd{\aaaa-2}{24\GGGG(-\aaaa)n^{1+\aaaa}}+O\llll(\dddd{1}{n^{2+\aaaa}}\rrrr).$$
\end{lem}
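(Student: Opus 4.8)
The plan is to treat the two expansions separately, in each case reducing the problem to the ratio-of-gamma-functions expansion \eqref{RatioGamma} together with the identities for the Gr\"{u}nwald-Letnikov weights established in Section 4. Throughout I use $w_k^{(\bbbb)}=(-1)^k\binom{\bbbb}{k}=\GGGG(k-\bbbb)/\big(\GGGG(k+1)\GGGG(-\bbbb)\big)$.

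\emph{Expansion of $W_n^0$.} By \eqref{L31}, $W_n^0=w_{n-1}^{(\aaaa-1)}$, and the binomial-gamma identity with $\bbbb:=\aaaa-1$, $k:=n-1$ gives $W_n^0=\GGGG(n-\aaaa)/\big(\GGGG(n)\GGGG(1-\aaaa)\big)$. I would then apply \eqref{RatioGamma} with $t:=-\aaaa$, $s:=0$, $k:=n$:
$$\frac{\GGGG(n-\aaaa)}{\GGGG(n)}=n^{-\aaaa}\sum_{m=0}^{\infty}\frac{(-1)^m B_m^{(1-\aaaa)}(-\aaaa)\,(\aaaa)^{(m)}}{m!\,n^m}.$$
Since iterating $\GGGG(x+1)=x\GGGG(x)$ yields $(-1)^m(\aaaa)^{(m)}/\GGGG(1-\aaaa)=1/\GGGG(1-m-\aaaa)$ (the analogue of the identity $(-1)^m(\aaaa+1)^{(m)}/\GGGG(-\aaaa)=1/\GGGG(-m-\aaaa)$ used earlier), this becomes $W_n^0=\sum_{m\ge0}B_m^{(1-\aaaa)}(-\aaaa)/\big(m!\,\GGGG(1-m-\aaaa)\,n^{m+\aaaa}\big)$. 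It then remains to evaluate $B_0^{(1-\aaaa)}(-\aaaa)=1$, $B_1^{(1-\aaaa)}(-\aaaa)=-(\aaaa+1)/2$ and $B_2^{(1-\aaaa)}(-\aaaa)=(\aaaa+2)(3\aaaa+1)/12$ from $B_1^{(\bbbb)}(x)=x-\bbbb/2$ and $B_2^{(\bbbb)}(x)=x^2-\bbbb x+\bbbb(3\bbbb-1)/12$, and to keep the terms $m=0,1,2$. (One could equally well apply \eqref{L9} with $\aaaa:=\aaaa-1$ at $k:=n-1$ and re-expand $(n-1)^{-m-\aaaa}$ in powers of $n$, but working directly with $\GGGG(n-\aaaa)/\GGGG(n)$ avoids that re-expansion.)

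\emph{Expansion of $W_n^1$.} I would start from the closed form $W_n^1=w_{n-1}^{(\aaaa-2)}-(n-\aaaa/2)^{1-\aaaa}/\GGGG(2-\aaaa)$ obtained in Section 4. Writing $w_{n-1}^{(\aaaa-2)}=\GGGG(n+1-\aaaa)/\big(\GGGG(n)\GGGG(2-\aaaa)\big)$ and applying \eqref{RatioGamma} with $t:=1-\aaaa$, $s:=0$, $k:=n$, together with $(-1)^m(\aaaa-1)^{(m)}/\GGGG(2-\aaaa)=1/\GGGG(2-m-\aaaa)$, yields the expansion of $w_{n-1}^{(\aaaa-2)}$ in powers of $n$; the Bernoulli values needed are $B_0^{(2-\aaaa)}(1-\aaaa)=1$, $B_1^{(2-\aaaa)}(1-\aaaa)=-\aaaa/2$ and $B_2^{(2-\aaaa)}(1-\aaaa)=(\aaaa+1)(3\aaaa-2)/12$. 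In parallel I expand $(n-\aaaa/2)^{1-\aaaa}=n^{1-\aaaa}\sum_{j\ge0}\binom{1-\aaaa}{j}(-\aaaa/2)^j n^{-j}$ by the binomial series. Subtracting, the $n^{1-\aaaa}$ terms cancel and so do the $n^{-\aaaa}$ terms — this cancellation of the two leading orders is exactly why the last two weights of \eqref{L33} were defined as they are. Collecting the remaining $n^{-1-\aaaa}$ contributions and using $\GGGG(2-\aaaa)=\aaaa(\aaaa-1)\GGGG(-\aaaa)$ to put both over $\GGGG(-\aaaa)$, the coefficient becomes $\big((\aaaa+1)(3\aaaa-2)-3\aaaa^2\big)/\big(24\GGGG(-\aaaa)\big)=(\aaaa-2)/\big(24\GGGG(-\aaaa)\big)$, which is the claimed formula.

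The bulk of the argument is the routine bookkeeping of the two asymptotic series up to order $n^{-1-\aaaa}$; the step I expect to require the most care is the double cancellation in $W_n^1$, where one must retain enough terms in both series and, in particular, rewrite the coefficient $\aaaa^3(\aaaa-1)/\big(8\GGGG(2-\aaaa)\big)$ coming from $(n-\aaaa/2)^{1-\aaaa}$ as $\aaaa^2/\big(8\GGGG(-\aaaa)\big)$ so that it combines correctly with the $\GGGG(-\aaaa)$-term from $w_{n-1}^{(\aaaa-2)}$. The only other non-mechanical ingredient is the verification of the two gamma identities, which follow from $\GGGG(z+m)=\GGGG(z)\prod_{j=0}^{m-1}(z+j)$ applied at $z=1-m-\aaaa$ and $z=2-m-\aaaa$.
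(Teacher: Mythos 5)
Your proposal is correct and follows essentially the same route as the paper: the same reduction of $W_n^0$ to $w_{n-1}^{(\aaaa-1)}$ and of $W_n^1$ to $w_{n-1}^{(\aaaa-2)}-(n-\aaaa/2)^{1-\aaaa}/\GGGG(2-\aaaa)$, the same binomial expansion of $(n-\aaaa/2)^{1-\aaaa}$, the same cancellation of the $n^{1-\aaaa}$ and $n^{-\aaaa}$ orders, and all of your Bernoulli-polynomial values and the final coefficients $-(\aaaa+1)/2$, $(\aaaa+2)(3\aaaa+1)/24$ and $(\aaaa-2)/24$ check out. The only (harmless) variation is that you expand $\GGGG(n-\aaaa)/\GGGG(n)$ directly at $n$ via \eqref{RatioGamma}, whereas the paper applies \eqref{L10} with $\aaaa:=\aaaa-1$ at $k=n-1$ and then re-expands the powers of $n-1$ in powers of $n$; your shortcut trades that re-centering for evaluating $B_m^{(1-\aaaa)}(-\aaaa)$ and $B_m^{(2-\aaaa)}(1-\aaaa)$, and both computations lead to the same expansions.
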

\begin{proof}
 From expansion formula \eqref{L10} for $w_{n}^{(\aaaa-1)}$ with $\aaaa:=\aaaa-1$
\begin{align}\label{L300}
w_{n-1}^{(\aaaa-1)}=\dddd{1}{\GGGG(1-\aaaa) (n-1)^\aaaa}-&\dddd{\aaaa-1}{2\GGGG(-\aaaa)(n-1)^{1+\aaaa}}+\\
&\qquad\dddd{(\aaaa-1)(3\aaaa-2)}{24\GGGG(-1-\aaaa)n^{2+\aaaa}}+O\llll(\dddd{1}{n^{3+\aaaa}}\rrrr).\nonumber
\end{align}
From the binomial formula
\be\label{L35}
\dddd{1}{(n-1)^{\aaaa}}=n^{-\aaaa}\llll(1-\dddd{1}{n}  \rrrr)^{-\aaaa}=\dddd{1}{n^{\aaaa}}+\dddd{\aaaa}{n^{1+\aaaa}}+\dddd{\aaaa(\aaaa+1)\aaaa}{2n^{2+\aaaa}} +O\llll(\dddd{1}{n^{3+\aaaa}}\rrrr).
\ee
By substituting $\aaaa:=\aaaa+1$ and $\aaaa:=\aaaa+2$ in \eqref{L35} we obtain
\begin{align}\label{L36}
\dddd{1}{(n-1)^{1+\aaaa}}=\dddd{1}{n^{1+\aaaa}}+\dddd{1+\aaaa}{n^{2+\aaaa}} +O\llll(\dddd{1}{n^{3+\aaaa}}\rrrr), \dddd{1}{(n-1)^{2+\aaaa}}=\dddd{1}{n^{2+\aaaa}} +O\llll(\dddd{1}{n^{3+\aaaa}}\rrrr).
\end{align}
From \eqref{L300}, \eqref{L35} and \eqref{L36} the weight
 $w_{n-1}^{(\aaaa-1)}=W_n^0$ has an asymptotic expansion
\begin{align*}w_{n-1}^{(\aaaa-1)}=\dddd{1}{\GGGG(1- \aaaa) n^{\aaaa}}+\llll(\dddd{\aaaa}{\GGGG(1-\aaaa)}-\dddd{\aaaa-1}{2\GGGG(-\aaaa)}\rrrr)\dddd{1}{n^{1+\aaaa}}+\dddd{S}{n^{2+\aaaa}}+O\llll(\dddd{1}{n^{3+\aaaa}}\rrrr),
\end{align*}
where
$$S=-\dddd{\aaaa(\aaaa+1)}{2\GGGG(1-\aaaa)}-\dddd{(\aaaa-1) (\aaaa+1)}{2\GGGG(-\aaaa)}-\dddd{(\aaaa+1)(2+\aaaa)}{24\GGGG(-1-\aaaa)}=\dddd{(\aaaa+2)(3\aaaa+1)}{24\GGGG(-1-\aaaa)}.$$
Hence
\begin{align}\label{L37}
w_{n-1}^{(\aaaa-1)}=\dddd{1}{\GGGG(1-\aaaa) n^{\aaaa}}-\dddd{1+\aaaa}{2\GGGG(-\aaaa)n^{1+\aaaa}}+\dddd{(\aaaa+2)(3\aaaa+1)}{24\GGGG (-1-\aaaa)n^{2+\aaaa}}+O\llll(\dddd{1}{n^{3+\aaaa}}\rrrr).
\end{align}
 From the binomial formula
$$\llll(n-\dddd{\aaaa}{2}\rrrr)^{1-\aaaa}=n^{1-\aaaa}\llll(1-\dddd{\aaaa}{2n}\rrrr)^{1-\aaaa}=n^{1-\aaaa}\llll(\sum_{k=0}^{2}\binom{1-\aaaa}{k}\llll(\dddd{\aaaa}{2n}\rrrr)^k+O\llll(\dddd{1}{n^3} \rrrr)\rrrr),$$
$$
	\dddd{\llll(n-\aaaa/2\rrrr)^{1-\aaaa}}{\GGGG(2-\aaaa)}=\dddd{1}{\GGGG(2-\aaaa)n^{\aaaa-1}}+\dddd{1}{2\GGGG(- \aaaa)n^{\aaaa}}+\dddd{\aaaa^2}{8\GGGG(-\aaaa)n^{\aaaa+1}}+O\llll(\dddd{1}{n^{2+\aaaa}} \rrrr).
$$
	 From \eqref{L37} with $\aaaa:=\aaaa-1$
		\begin{align*} \label{L39}
w_{n-1}^{(\aaaa-2)}=\dddd{1}{\GGGG(2-\aaaa)n^{\aaaa-1}}- \dddd{\aaaa}{2\GGGG(1-\aaaa)n^{\aaaa}}+\dddd{(\aaaa+1)(3\aaaa-2)}{24\GGGG(-\aaaa)n^{1+\aaaa}}+O\llll(\dddd{1}{n^{2+\aaaa}}\rrrr).\nonumber
\end{align*}
Hence
		$$W_n^1=w_{n-1}^{(\aaaa-2)}-\dddd{(n-\aaaa/2)^{1-\aaaa}}{\GGGG(2-\aaaa)}=\dddd{\aaaa-2}{24\GGGG(-\aaaa)n^{1+\aaaa}}+O\llll(\dddd{1}{n^{2+\aaaa}}\rrrr).$$
		\qed
\end{proof}
In the Lemma 3 we use the expansion formulas of $W_n^0$ and $W_n^1$ to obtain the expansions of order $2+\aaaa$ of the weights $\gggg_{n-1}^{(\aaaa)}$ and $\gggg_{n}^{(\aaaa)}$ of approximation \eqref{L8}.
	\begin{lem}
	\be \label{L41}
	\gggg_{n-1}^{(\aaaa)}=\dddd{26-\aaaa}{24\GGGG(-\aaaa)n^{1+\aaaa}}+O\llll(\dddd{1}{n^{2+\aaaa}}\rrrr),
	\ee
	\be\label{L42}
	\gggg_{n}^{(\aaaa)}=-\dddd{1}{\GGGG(1- \aaaa) n^\aaaa}+\dddd{13\aaaa+10}{24\GGGG(-\aaaa)n^{1+\aaaa}}+O\llll(\dddd{1}{n^{2+\aaaa}}\rrrr).
	\ee
	\end{lem}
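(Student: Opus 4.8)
The plan is to reduce everything to the expansions already established in Lemma~2, so that Lemma~3 becomes essentially a collection of coefficients. Recall that in Section~4 the last two weights of approximation \eqref{L33} were obtained in the form
$$\gggg_{n-1}^{(\aaaa)}=w_{n-1}^{(\aaaa)}-W_n^1,\qquad \gggg_{n}^{(\aaaa)}=W_n^1-W_n^0 .$$
Hence it suffices to insert the asymptotic expansions of $w_{n-1}^{(\aaaa)}$, $W_n^0$ and $W_n^1$, each carried up to an error $O\left(n^{-2-\aaaa}\right)$, and then to read off the coefficients of the powers $n^{-\aaaa}$ and $n^{-1-\aaaa}$.

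First I would quote Lemma~2 for $W_n^0=w_{n-1}^{(\aaaa-1)}$ and for $W_n^1$, since those expansions are exactly its statement. For $w_{n-1}^{(\aaaa)}$ I would take the asymptotic expansion \eqref{L10} of $w_k^{(\aaaa)}$ evaluated at $k=n-1$ and then convert the powers of $n-1$ into powers of $n$ via the binomial expansion \eqref{L36}; because the leading term of \eqref{L10} is already of order $k^{-1-\aaaa}$, this conversion changes $w_{n-1}^{(\aaaa)}$ only at order $n^{-2-\aaaa}$, which lies beyond the required accuracy, so that $w_{n-1}^{(\aaaa)}=\dddd{1}{\GGGG(-\aaaa)n^{1+\aaaa}}+O\left(n^{-2-\aaaa}\right)$.

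Substituting these three expansions into the two identities above and combining terms then finishes the argument. For $\gggg_{n-1}^{(\aaaa)}$ there is no $n^{-\aaaa}$ contribution, since that power occurs only in $W_n^0$, which does not enter this weight, and the coefficient of $n^{-1-\aaaa}$ equals $\dddd{1}{\GGGG(-\aaaa)}-\dddd{\aaaa-2}{24\GGGG(-\aaaa)}=\dddd{26-\aaaa}{24\GGGG(-\aaaa)}$, which is \eqref{L41}. For $\gggg_{n}^{(\aaaa)}=W_n^1-W_n^0$ the $n^{-\aaaa}$ coefficient comes entirely from $-W_n^0$ and equals $-1/\GGGG(1-\aaaa)$, while the coefficient of $n^{-1-\aaaa}$ is $\dddd{\aaaa-2}{24\GGGG(-\aaaa)}+\dddd{\aaaa+1}{2\GGGG(-\aaaa)}=\dddd{13\aaaa+10}{24\GGGG(-\aaaa)}$, which is \eqref{L42}. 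The hard part is not conceptual but purely a matter of bookkeeping: one must keep enough terms in \eqref{L36} (and in the expansion of $\left(n-\aaaa/2\right)^{1-\aaaa}$ used inside Lemma~2) so that the $n^{-1-\aaaa}$ coefficients come out exact, and one should rewrite every $\GGGG$-factor with the common argument $-\aaaa$ by means of $\GGGG(1-\aaaa)=-\aaaa\,\GGGG(-\aaaa)$, after which the additions of the rational coefficients are immediate. I do not anticipate any genuine obstacle beyond this.
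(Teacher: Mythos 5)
Your proposal is correct and follows essentially the same route as the paper: it uses the identities $\gggg_{n-1}^{(\aaaa)}=w_{n-1}^{(\aaaa)}-W_n^1$ and $\gggg_{n}^{(\aaaa)}=W_n^1-W_n^0$ from Section 4, substitutes the expansions of $W_n^0$ and $W_n^1$ from Lemma 2 together with $w_{n-1}^{(\aaaa)}=\dfrac{1}{\GGGG(-\aaaa)n^{1+\aaaa}}+O\llll(n^{-2-\aaaa}\rrrr)$ obtained from \eqref{L10}, and the coefficient arithmetic $1-\dfrac{\aaaa-2}{24}=\dfrac{26-\aaaa}{24}$ and $\dfrac{\aaaa-2}{24}+\dfrac{\aaaa+1}{2}=\dfrac{13\aaaa+10}{24}$ agrees with the paper's computation.
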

	\begin{proof}
				$$\gggg_{n}^{(\aaaa)}=W_n^1-W_n^0=-\dddd{1}{\GGGG(1- \aaaa) n^\aaaa}+\llll(\dddd{\aaaa+1}{2}+\dddd{\aaaa-2}{24}\rrrr)\dddd{1}{\GGGG(-\aaaa)n^{1+\aaaa}}+O\llll(\dddd{1}{n^{2+\aaaa}}\rrrr),$$
								$$\gggg_{n}^{(\aaaa)}=-\dddd{1}{\GGGG(1- \aaaa) n^\aaaa}+\dddd{13\aaaa+10}{24\GGGG(-\aaaa)n^{1+\aaaa}}+O\llll(\dddd{1}{n^{2+\aaaa}}\rrrr).$$
From \eqref{L10} with $n:=n-1$
	$$w_{n-1}^{(\aaaa)}=(-1)^n\binom{\aaaa}{n-1}=\dddd{1}{\GGGG(-\aaaa)(n-1)^{1+\aaaa}}+O\llll(\dddd{1}{(n-1)^{2+\aaaa}}\rrrr),$$
		$$w_{n-1}^{(\aaaa)}=\dddd{1}{\GGGG(-\aaaa)n^{1+\aaaa}}+O\llll(\dddd{1}{n^{2+\aaaa}}\rrrr).$$
Therefore
			$$\gggg_{n-1}^{(\aaaa)}=w_{n-1}^{(\aaaa)}-W_n^1=\dddd{1}{\GGGG(-\aaaa)n^{1+\aaaa}}-\dddd{\aaaa-2}{24\GGGG(-\aaaa)n^{1+\aaaa}}+O\llll(\dddd{1}{n^{2+\aaaa}}\rrrr),$$
						$$\gggg_{n-1}^{(\aaaa)}=\dddd{26-\aaaa}{24\GGGG(-\aaaa)n^{1+\aaaa}}+O\llll(\dddd{1}{n^{2+\aaaa}}\rrrr).$$
						\qed
\end{proof}
Now we construct a second-order  approximation of the Caputo derivative at the point $x_n-\aaaa h/2$ by replacing the weights $\gggg_k^{(\aaaa)}$ of approximation \eqref{L33} which have an index $k>\left\lceil N/5\right\rceil$ with the first terms of their expansion formulas \eqref{L10}, \eqref{L41} and \eqref{L42}:
			\be\label{L43}
			\dddd{1}{h^\aaaa}\sum_{k=0}^n \bar{\gggg}_k^{(\aaaa)}y_{n-k}=y^{(\aaaa)}_{n-\aaaa/2}+O\llll(h^2\rrrr).
				\ee
	When $n\leq \left\lceil N/5\right\rceil$ the weights of approximation \eqref{L43} are equal to the weights  of approximation \eqref{L33}: $
	\bar{\gggg}_k^{(\aaaa)}=
w_k^{(\aaaa)}=(-1)^k\binom{\aaaa}{k}$ for $ 0\leq k\leq n-2$ and
$$\bar{\gggg}_{n-1}^{(\aaaa)}=\displaystyle{\dddd{n-2\aaaa}{1+\aaaa-n}w_{n-2}^{(\aaaa-2)}+\dddd{(n-\aaaa/2)^{1-\aaaa}}{\GGGG(2-\aaaa)}},\bar{\gggg}_n^{(\aaaa)}=w_{n-2}^{(\aaaa-2)}-\dddd{(n-\aaaa/2)^{1-\aaaa}}{\GGGG(2-\aaaa)}.$$
	When $n> \left\lceil N/5\right\rceil$ the weights $\bar{\gggg}_k^{(\aaaa)}$  of approximation \eqref{L43} satisfy
	\begin{equation*}\label{L1_MM}
	\bar{\gggg}_k^{(\aaaa)}=\left\{
	\begin{array}{l l}
\displaystyle{w_k^{(\aaaa)}=(-1)^k\binom{\aaaa}{k}},& 0\leq k\leq \left\lceil N/5\right\rceil,\\
\dddd{1}{\GGGG(-\aaaa)k^{1+\aaaa}}-\dddd{\aaaa}{2\GGGG(-1-\aaaa)k^{2+\aaaa}},&  \left\lceil N/5\right\rceil<k=n-2,\\
	\end{array}
		\right .
	\end{equation*}
	$$\bar{\gggg}_{n-1}^{(\aaaa)}=\dddd{26-\aaaa}{24\GGGG(-\aaaa)n^{1+\aaaa}},  \bar{\gggg}_n^{(\aaaa)}=
-\dddd{1}{24\GGGG(1-\aaaa)n^{\aaaa}}+\dddd{13\aaaa+10}{24\GGGG(-\aaaa)n^{1+\aaaa}}.$$
\section{Numerical results}
	The fractional relaxation equation is a two-term ordinary fractional differential equation, where $0<\aaaa<1$:
\begin{equation}\label{FRE}
	y^{(\aaaa)}(x)+L y(x)=f(x),\quad y(0)=y_0.
	\end{equation}
	From \eqref{L2} with $n=1$ we obtain an approximation of order $2-\aaaa$ for the Caputo derivative  at the point at $x=h$:
\begin{equation}\label{L3}
y^{(\aaaa)}(h)=\dddd{y(h)-y(0)}{\GGGG(2-\aaaa)h^\aaaa}+O\llll( h^{2-\aaaa}\rrrr).
\end{equation}
By approximating the Caputo derivative in equation \eqref{FRE} with \eqref{L3} we obtain
	$$\dddd{y(h)-y(0)}{\GGGG(2-\aaaa)h^\aaaa}+L y(h)=f(h)+O\llll( h^{2-\aaaa}\rrrr),
	$$
	$$\widetilde{y}_1=\dddd{y(0)+\GGGG(2-\aaaa)h^\aaaa f(h)}{1+\GGGG(2-\aaaa)L h^\aaaa}=y(h)+O\llll( h^2\rrrr).$$
The number $\widetilde{y}_1$ is a second order approximation for the value of the solution of  equation \eqref{FRE} at $x=h$. Suppose that
	\begin{equation}\tag{*}
	\dddd{1}{h^\aaaa}\sum_{k=0}^n \llllll_k^{(\aaaa)}y_{n-k}=y_n^{(\aaaa)}+O\llll( h^{\bbbb}\rrrr)
	\end{equation}
is an approximation of the Caputo derivative of order $\bbbb$, where $\bbbb\leq 2$. In (Dimitrov 2016) we obtain the numerical solution of  equation \eqref{FRE} on the interval $[0,1]$ which uses approximation (*) for the Caputo derivative.
	\begin{equation}\tag{NS1(*)}
u_n=\dddd{1}{\llllll_0^{(\aaaa)}+L h^\aaaa}
\llll(h^\aaaa f_n-\sum_{k=1}^{n-1} \llllll_k^{(\aaaa)}u_{n-k}   \rrrr),\; u_0=y_0, u_1=\widetilde{y}_1
	\end{equation}
 Suppose that
		\begin{equation}\tag{**}
	\dddd{1}{h^\aaaa}\sum_{k=0}^n \llllll_k^{(\aaaa)}y_{n-k}=y^{(\aaaa)}_{n-\aaaa/2}+O\llll( h^{2}\rrrr)
	\end{equation}
is a second-order shifted approximation of the Caputo derivative with a shift parameter $-\aaaa h/2$.
In (Dimitrov 2014) we show that the numerical solution of equation \eqref{FRE} which uses approximation (**) of the Caputo derivative is computed with $u_0=y_0,u_1=\widetilde{y}_1$ and
	\begin{equation}\tag{NS2(**)}
 u_n=\dddd{1}{\llllll_0^{(\aaaa)}+L\llll(1-\frac{\aaaa}{2} \rrrr)h^\aaaa}\llll(h^\aaaa f_n-\dddd{\aaaa L h^\aaaa}{2}u_{n-1}-\sum_{k=1}^{n-1} \llllll_k^{(\aaaa)}u_{n-k} \rrrr) .
	\end{equation}
	{\bf Example 1:}	
\be \label{Example1}
	y^{(\aaaa)}(x)+y(x)=2x^{2+\aaaa}+\GGGG(3+\aaaa)x^2,\; y(0)=0,
	\ee
		Equation \eqref{Example1} has the solution $y(x)=2x^{2+\aaaa}$. The solution of equation \eqref{Example1} satisfies the condition $y(0)=y'(0)=0$.
	The numerical results for the error and the order of second-order numerical solution NS2(9) of equation \eqref{Example1} are presented in Table 1. The numerical results for the error and the order of numerical solution NS2(16) of order $2-\aaaa$ and the second-order numerical solution NS2(17) of equation \eqref{Example1} are  presented  in Table 2 and Table 3.
\begin{table}[ht]
	\caption{Maximum error and order of numerical solution NS2(9) of  equation \eqref{Example1} when $\aaaa=0.2,\aaaa=0.5$ and $\aaaa=0.9$.}
	\small
	\centering
  \begin{tabular}{| l | c  c | c  c | c  c| }
		\hline
		\multirow{2}*{ $\quad \boldsymbol{h}$}  & \multicolumn{2}{c|}{$\boldsymbol{\aaaa=0.2}$} & \multicolumn{2}{c|}{$\boldsymbol{\aaaa=0.5}$}  & \multicolumn{2}{c|}{$\boldsymbol{\aaaa=0.9}$} \\
		\cline{2-7}
   & $Error$ & $Order$  & $Error$ & $Order$  & $Error$ & $Order$ \\
		\hline
$0.00625$    & $6.9\times 10^{-6}$  & $1.9868$  & $0.00002911$         & $1.9742$    & $0.00004546$         & $1.9774$        \\
$0.003125$   & $1.7\times 10^{-6}$  & $1.9934$  & $7.3\times 10^{-6}$  & $1.9871$    & $0.00001145$         & $1.9889$        \\
$0.0015625$  & $4.4\times 10^{-7}$  & $1.9967$  & $1.8\times 10^{-6}$  & $1.9936$    & $2.8\times 10^{-6}$  & $1.9945$        \\
$0.00078125$ & $1.1\times 10^{-7}$  & $1.9983$  & $4.6\times 10^{-7}$  & $1.9968$    & $7.2\times 10^{-7}$  & $1.9973$        \\
\hline
  \end{tabular}
	\end{table}
\begin{table}[ht]
	\caption{Maximum error and order of numerical solution NS2(16) of  equation \eqref{Example1} when $\aaaa=0.2,\aaaa=0.5$ and $\aaaa=0.9$.}
	\small
	\centering
  \begin{tabular}{| l | c  c | c  c | c  c| }
		\hline
		\multirow{2}*{ $\quad \boldsymbol{h}$}  & \multicolumn{2}{c|}{$\boldsymbol{\aaaa=0.2}$} & \multicolumn{2}{c|}{$\boldsymbol{\aaaa=0.5}$}  & \multicolumn{2}{c|}{$\boldsymbol{\aaaa=0.9}$} \\
		\cline{2-7}
   & $Error$ & $Order$  & $Error$ & $Order$  & $Error$ & $Order$ \\
		\hline
$0.00625$    & $7.9\times 10^{-6}$  & $1.9092$  & $0.00007985$         & $1.5675$    & $0.00044332$  & $1.1490$        \\
$0.003125$   & $2.1\times 10^{-6}$  & $1.9031$  & $0.00002728$         & $1.5494$    & $0.00020323$  & $1.1252$        \\
$0.0015625$  & $5.7\times 10^{-7}$  & $1.8965$  & $9.4\times 10^{-6}$  & $1.5358$    & $0.00009398$  & $1.1127$        \\
$0.00078125$ & $1.5\times 10^{-7}$  & $1.8897$  & $3.3\times 10^{-6}$  & $1.5257$    & $0.00004365$  & $1.1063$        \\
\hline
  \end{tabular}
	\end{table}
\begin{table}[ht]
	\caption{Maximum error and order of numerical solution NS2(17) of  equation \eqref{Example1} when $\aaaa=0.2,\aaaa=0.5$ and $\aaaa=0.9$.}
	\small
	\centering
  \begin{tabular}{| l | c  c | c  c | c  c| }
		\hline
		\multirow{2}*{ $\quad \boldsymbol{h}$}  & \multicolumn{2}{c|}{$\boldsymbol{\aaaa=0.2}$} & \multicolumn{2}{c|}{$\boldsymbol{\aaaa=0.5}$}  & \multicolumn{2}{c|}{$\boldsymbol{\aaaa=0.9}$} \\
		\cline{2-7}
   & $Error$ & $Order$  & $Error$ & $Order$  & $Error$ & $Order$ \\
		\hline
$0.00625$    & $4.3\times 10^{-6}$  & $1.9993$  & $9.3\times 10^{-6}$  & $2.0239$    & $0.00001743$         & $2.1067$        \\
$0.003125$   & $1.1\times 10^{-6}$  & $1.9998$  & $2.3\times 10^{-6}$  & $2.0188$    & $4.0\times 10^{-6}$  & $2.1181$        \\
$0.0015625$  & $2.7\times 10^{-7}$  & $2.0000$  & $5.7\times 10^{-7}$  & $2.0143$    & $9.2\times 10^{-7}$  & $2.1256$        \\
$0.00078125$ & $6.7\times 10^{-8}$  & $2.0000$  & $1.4\times 10^{-7}$  & $2.0106$    & $2.1\times 10^{-7}$  & $2.1313$        \\
\hline
  \end{tabular}
	\end{table}
		The exponential function has a Caputo derivative $D^\aaaa e^x=x^{1-\aaaa}E_{1,2-\aaaa}(x)$, where $E_{a,b}(x)$ is the  Mittag-Leffler function
$E_{a,b}=\sum_{n=0}^\infty\dddd{x^n}{\GGGG(a n+b)}$.
	The sine and cosine functions have Caputo derivatives
$$ D^\aaaa \sin x=x^{1-\aaaa}E_{2,2-\aaaa}\llll(-x^2\rrrr),\; D^\aaaa \cos x=-x^{2-\aaaa}E_{2,3-\aaaa}\llll(-x^2\rrrr).
$$
	The digamma function is the logarithmic derivative of the gamma function
	$$\psi (x)=\dddd{d}{dx}\ln \GGGG(x)=\dddd{\GGGG'(x)}{\GGGG(x)}.$$
	The digamma function satisfies:
	$$\psi(1)=-\gggg,\quad\psi(n)=H_{n-1}-\gggg,\quad \psi (x+1)=\psi (x)+\dddd{1}{x},
	$$
	where $H_n=\sum_{k=1}^n \frac{1}{k}$ is the $n$-th harmonic number and $\gggg=0.5772\dots$ is the Euler-Mascheroni constant. The Caputo derivative of order $\aaaa$ of the function $y(x)=x^3 \ln x$ satisfies
$$	D^\aaaa x^3 \ln x=\dddd{\GGGG(4)x^{3-\aaaa}}{\GGGG(4-\aaaa)}(\ln x+\psi(4)-\psi(4-\aaaa)),
$$
$$	D^\aaaa x^3 \ln x=\dddd{x^{3-\aaaa}}{\GGGG(4-\aaaa)}(11+6\ln x-6\gggg-6\psi(4-\aaaa)).
$$
{\bf Example 2:}	
	\begin{align}\label{Example2} y^{(\aaaa)}&(x)+y(x)=\sin x+\cos x+x^3 \ln x+x^{1-\aaaa}E_{2,2-\aaaa}\llll(-x^2\rrrr)- \\
	&x^{2-\aaaa}E_{2,3-\aaaa}\llll(-x^2\rrrr)+\dddd{x^{3-\aaaa}}{\GGGG(4-\aaaa)}(11+6\ln x-6\gggg-6\psi(4-\aaaa)),\; y(0)=1.\nonumber
	\end{align}
	Equation \eqref{Example3} has an exact solution $y(x)=\sin x+\cos x+x^3 \ln x$. The solution satisfies  $y(0)=y'(0)=1$.   The numerical results for the error and the order of  numerical  solution NS1(22) of order $2-\aaaa$ and second-order numerical solutions NS2(25), NS2(30), NS2(37) of equation \eqref{Example3} are  presented  in  Table 4, Table 5, Table 6 and Table 7.
			\begin{table}[ht]
	\caption{Maximum error and order of numerical solution NS1(22) of  equation \eqref{Example2} when $\aaaa=0.2,\aaaa=0.5$ and $\aaaa=0.9$.}
	\small
	\centering
  \begin{tabular}{ |l | c  c | c  c | c  c| }
		\hline
		\multirow{2}*{ $\quad \boldsymbol{h}$}  & \multicolumn{2}{c|}{$\boldsymbol{\aaaa=0.2}$} & \multicolumn{2}{c|}{$\boldsymbol{\aaaa=0.5}$}  & \multicolumn{2}{c|}{$\boldsymbol{\aaaa=0.9}$} \\
		\cline{2-7}
   & $Error$ & $Order$  & $Error$ & $Order$  & $Error$ & $Order$ \\
		\hline
$0.00625$    & $0.00001516$         & $1.8429$  & $0.00009894$         & $1.5343$    & $0.00115363$   & $1.0966$        \\
$0.003125$   & $4.2\times 10^{-6}$  & $1.8429$  & $0.00003438$         & $1.5251$    & $0.00053881$   & $1.0983$        \\
$0.0015625$  & $1.2\times 10^{-6}$  & $1.8395$  & $0.00001201$         & $1.5174$    & $0.00025150$   & $1.0992$        \\
$0.00078125$ & $3.3\times 10^{-7}$  & $1.8332$  & $4.4\times 10^{-6}$  & $1.5029$    & $0.00011736$   & $1.0996$        \\
\hline
  \end{tabular}
	\end{table}	
	\vspace{-0.3cm}
	\begin{table}[ht]
	\caption{Maximum error and order of numerical solution NS1(25) of  equation \eqref{Example2} when $\aaaa=0.2,\aaaa=0.5$ and $\aaaa=0.9$.}
	\small
	\centering
  \begin{tabular}{| l | c  c | c  c | c  c| }
		\hline
		\multirow{2}*{ $\quad \boldsymbol{h}$}  & \multicolumn{2}{c|}{$\boldsymbol{\aaaa=0.2}$} & \multicolumn{2}{c|}{$\boldsymbol{\aaaa=0.5}$}  & \multicolumn{2}{c|}{$\boldsymbol{\aaaa=0.9}$} \\
		\cline{2-7}
   & $Error$ & $Order$  & $Error$ & $Order$  & $Error$ & $Order$ \\
		\hline
$0.00625$    & $0.00001771$         & $1.9405$   & $0.00002919$         & $1.8905$    & $0.00002368$          & $2.1213$       \\
$0.003125$   & $4.5\times 10^{-6}$  & $1.9686$   & $7.6\times 10^{-6}$  & $1.9346$    & $5.6\times 10^{-6}$   & $2.0908$       \\
$0.0015625$  & $1.1\times 10^{-6}$  & $1.9833$   & $1.9\times 10^{-6}$  & $1.9596$    & $1.3\times 10^{-6}$   & $2.0588$        \\
$0.00078125$ & $2.9\times 10^{-7}$  & $1.9910$   & $5.0\times 10^{-7}$  & $1.9742$    & $3.3\times 10^{-7}$   & $2.0368$        \\
\hline
  \end{tabular}
	\end{table}
		\vspace{-0.3cm}
\begin{table}[ht]
	\caption{Maximum error and order of numerical solution NS2(5) of  equation \eqref{Example2} when $\aaaa=0.2,\aaaa=0.5$ and $\aaaa=0.9$.}
	\small
	\centering
  \begin{tabular}{ |l | c  c | c  c | c  c| }
		\hline
		\multirow{2}*{ $\quad \boldsymbol{h}$}  & \multicolumn{2}{c|}{$\boldsymbol{\aaaa=0.2}$} & \multicolumn{2}{c|}{$\boldsymbol{\aaaa=0.5}$}  & \multicolumn{2}{c|}{$\boldsymbol{\aaaa=0.9}$} \\
		\cline{2-7}
   & $Error$ & $Order$  & $Error$ & $Order$  & $Error$ & $Order$ \\
		\hline
$0.00625$    & $3.3\times 10^{-6}$  & $1.9951$  & $6.7\times 10^{-6}$  & $2.0589$    & $0.00001766$          & $2.0905$        \\
$0.003125$   & $8.2\times 10^{-7}$  & $1.9976$  & $1.6\times 10^{-6}$  & $2.0339$    & $4.2\times 10^{-6}$   & $2.0591$        \\
$0.0015625$  & $2.1\times 10^{-7}$  & $1.9988$  & $4.1\times 10^{-7}$  & $2.0173$    & $1.0\times 10^{-6}$   & $2.0363$        \\
$0.00078125$ & $5.1\times 10^{-8}$  & $1.9994$  & $1.0\times 10^{-7}$  & $2.0075$    & $2.5\times 10^{-7}$   & $2.0214$        \\
\hline
  \end{tabular}
	\end{table}
		\vspace{-0.3cm}
\begin{table}[!ht]
	\caption{Maximum error and order of numerical solution NS2(37) of  equation \eqref{Example2} when $\aaaa=0.2,\aaaa=0.5$ and $\aaaa=0.9$.}
	\small
	\centering
  \begin{tabular}{ |l | c  c | c  c | c  c |}
		\hline
		\multirow{2}*{ $\quad \boldsymbol{h}$}  & \multicolumn{2}{c|}{$\boldsymbol{\aaaa=0.2}$} & \multicolumn{2}{c|}{$\boldsymbol{\aaaa=0.5}$}  & \multicolumn{2}{c|}{$\boldsymbol{\aaaa=0.9}$} \\
		\cline{2-7}
   & $Error$ & $Order$  & $Error$ & $Order$  & $Error$ & $Order$ \\
		\hline
$0.00625$    & $0.00009038$         & $1.9180$  & $0.00019044$         & $1.9240$    & $0.00013348$         & $1.9553$        \\
$0.003125$   & $0.00002322$         & $1.9603$  & $0.00004891$         & $1.9611$    & $0.00003389$         & $1.9776$        \\
$0.0015625$  & $5.9\times 10^{-6}$  & $1.9787$  & $0.00001239$         & $1.9801$    & $8.5\times 10^{-6}$  & $1.9889$        \\
$0.00078125$ & $1.5\times 10^{-6}$  & $1.9893$  & $3.1\times 10^{-6}$  & $1.9901$    & $2.1\times 10^{-6}$  & $1.9946$        \\
\hline
  \end{tabular}
	\end{table}\\
	
The analytical solutions of the system of ordinary fractional differential equations
			\begin{equation}\label{System}
	\left|
	\begin{array}{l l}
y^{(\aaaa)}(x)+Ay(x)+Bz(x)=f(x),&  \\
z^{(\aaaa)}(x)+Cy(x)+Dz(x)=g(x),&  \\
	\end{array}
		\right .
	\end{equation}
	is in studied in  (Ert\"{u}rk and Momani 2008; Diethlm et al. 2017).
Now we obtain the numerical solution NS3(*) of \eqref{System} which uses approximation (*) of the Caputo derivative. By
approximating the Caputo derivative of $y(x)$ and $z(x)$ at the point $x_n=nh$ in both equations of \eqref{System} we obtain
\begin{equation}\label{L433}
\dddd{1}{h^\aaaa}\sum_{k=0}^n \llllll_k^{(\aaaa)}y_{n-k}+A y_n+B  z_n=  f_n+O\llll( h^{\bbbb}\rrrr),
\end{equation}
\begin{equation}\label{L44}
\dddd{1}{h^\aaaa}\sum_{k=0}^n \llllll_k^{(\aaaa)}z_{n-k}+C  y_n+D z_n= g_n+O\llll( h^{\bbbb}\rrrr).
\end{equation}
Let $\{u_n\}_{n=0}^N$ and $\{v_n\}_{n=0}^N$ be the numerical solutions of \eqref{System} on the net $\{x_n\}_{n=0}^N$, where $u_n$ is an approximation of $y_n$ and $v_4$ is an approximation for the value of $z_n$. From \eqref{L433} and \eqref{L44} the numbers $u_n$ and $v_n$ satisfy the following system of linear equations:
	\begin{equation}\label{S2}
	\left|
	\begin{array}{l l}
\llll(\llllll_0^{(\aaaa)}+Ah^\aaaa\rrrr) u_n+B h^\aaaa v_n=S_n,&  \\
C h^\aaaa u_n+\llll(\llllll_0^{(\aaaa)}+Dh^\aaaa\rrrr) v_n=Q_n,& \\
	\end{array}
		\right .
	\end{equation}
	where
	$$S_n=h^\aaaa f_n-\sum_{k=1}^n \llllll_k^{(\aaaa)}u_{n-k},\quad Q_n= h^\aaaa g_n-\sum_{k=1}^n \llllll_k^{(\aaaa)}v_{n-k}.$$
	Denote $\widetilde{D}=\left(\llllll_0^{(\aaaa)}+A h^\aaaa\right) \left(\llllll_0^{(\aaaa)}+D h^\aaaa\right)-B C h^{2\aaaa}$.
The numbers $u_n$ and $v_n$ are the solutions of \eqref{S2}
\begin{equation}\label{L46}
	u_n=\dddd{1}{\widetilde{D}}\llll(\llll(\llllll_0^{(\aaaa)}+Dh^\aaaa\rrrr) S_n-B h^\aaaa Q_n\rrrr),
	\end{equation}
	\begin{equation}\label{L47}
		v_n=\dddd{1}{\widetilde{D}}\llll(-C h^\aaaa S_n+\llll(\llllll_0^{(\aaaa)}+A h^\aaaa\rrrr) Q_n\rrrr).
		\end{equation}
From \eqref{L46} and \eqref{L47} with $n=1$ and approximation \eqref{L3} for
$y_1^{(\aaaa)}$ and $z_1^{(\aaaa)}$ we obtain
	$$\widetilde{u}_1=\dddd{1}{\widetilde{D}}\llll(\left(\ssss_0^{(\aaaa)}+D h^\aaaa\right) \left(h^\aaaa f(h)+\ssss_0^{(\aaaa)} y_0\right)-B h^\aaaa \left(h^\aaaa g(h)+\ssss_0^{(\aaaa)} z_0\right)\rrrr),$$
	$$\widetilde{v}_1=\dddd{1}{\widetilde{D}}\llll(\left(\ssss_0^{(\aaaa)}+A h^\aaaa\right) \left(h^\aaaa g(h)+\ssss_0^{(\aaaa)} z_0\right)-C h^a \left(h^\aaaa f(h)+\ssss_0^{(\aaaa)} y_0\right)\rrrr),$$
	where $\ssss_0^{(\aaaa)}=1/\GGGG(2-\aaaa)$. The numbers $\widetilde{u}_1$ and $\widetilde{v}_1$ are second-order approximations for the values of the solutions $y_1$ and $z_1$ of \eqref{System}.
	Numerical solution NS3(*) has initial conditions $u_0=y_0,u_1=\widetilde{u}_1,v_0=z_0,v_1=\widetilde{v}_1$ and and the numbers $u_n$ and $v_n$, for $2\leq n\leq N$ are computed with \eqref{L46} and \eqref{L47}.\\

		{\bf Example 3:}
			\begin{equation}\label{Example3}
	\left|
	\begin{array}{l l}
y^{(\aaaa)}(x)+y(x)+2z(x)=2e^x+e^{2x}+2^\aaaa x^{1-\aaaa}E_{1,2-\aaaa}(2x),&  \\
z^{(\aaaa)}(x)+3y(x)+4z(x)=4e^x+3e^{2x}+ x^{1-\aaaa}E_{1,2-\aaaa}(x).&  \\
	\end{array}
		\right .
	\end{equation}
	The solution of the sytem of equations \eqref{Example3} is $y(x)=e^{2x},z(x)=e^{x}$.
The numerical results for the error and the order of  numerical  solution NS3(22) of order $2-\aaaa$ and the second-order numerical solution NS3(25) of the solution $y(x)=e^{2x}$ of the system of fractional differential equations \eqref{Example3} are  presented  in Table 8 and Table 9.

	Now we obtain the numerical solution NS4(**) of
	\eqref{System} which uses the second-order shifted approximation (**) of the Caputo derivative. By approximating $y^{(\aaaa)}_{n-\aaaa/2}$ and $z^{(\aaaa)}_{n-\aaaa/2}$ with (**)  we obtain
	\begin{equation}\label{L49}
	\dddd{1}{h^\aaaa}\sum_{k=0}^n \llllll_k^{(\aaaa)}y_{n-k}+A y_{n-\aaaa/2}+B z_{n-\aaaa/2}=f_{n-\aaaa/2}+O\llll(h^2\rrrr),
	\end{equation}
\begin{equation}\label{L50}
		\dddd{1}{h^\aaaa}\sum_{k=0}^n \llllll_k^{(\aaaa)}z_{n-k}+C y_{n-\aaaa/2}+D z_{n-\aaaa/2}=g_{n-\aaaa/2}+O\llll(h^2\rrrr).
		\end{equation}
From \eqref{L49} and \eqref{L50} and the second order approximations	
		$$y_{n-\aaaa/2}=\dddd{\aaaa}{2} y_{n-1}+\llll(1-\dddd{\aaaa}{2} \rrrr)y_n+O\llll(h^2\rrrr),z_{n-\aaaa/2}=\dddd{\aaaa}{2} z_{n-1}+\llll(1-\dddd{\aaaa}{2} \rrrr)z_n+O\llll(h^2\rrrr),	$$
	we obtain the system of equations for $u_n$ and $v_n$
		\begin{equation*}
	\left|
	\begin{array}{l l}
\llll( \llllll_0^{(\aaaa)}+A h^\aaaa \llll( 1-\dddd{\aaaa}{2}\rrrr)\rrrr) u_{n}+B h^\aaaa \llll( 1-\dddd{\aaaa}{2}\rrrr) v_{n}=S_n,&  \\
C h^\aaaa \llll( 1-\dddd{\aaaa}{2}\rrrr) u_{n}+\llll( \llllll_0^{(\aaaa)}+D h^\aaaa \llll( 1-\dddd{\aaaa}{2}\rrrr)\rrrr) v_{n}=Q_n,&  \\
	\end{array}
		\right .
	\end{equation*}
	where
	$$S_n=h^\aaaa\llll(f_{n-\aaaa/2}-\dddd{\aaaa}{2}(A u_{n-1}+B v_{n-1})   \rrrr)-\sum_{k=1}^n \llllll_k^{(\aaaa)}u_{n-k},$$
		$$Q_n=h^\aaaa\llll(g_{n-\aaaa/2}-\dddd{\aaaa}{2}(C u_{n-1}+D v_{n-1})   \rrrr)-\sum_{k=1}^n \llllll_k^{(\aaaa)}v_{n-k}.$$
Denote
$$\widehat{D}=\llll( \llllll_0^{(\aaaa)}+A h^\aaaa \llll( 1-\dddd{\aaaa}{2}\rrrr)\rrrr)\llll( \llllll_0^{(\aaaa)}+D h^\aaaa \llll( 1-\dddd{\aaaa}{2}\rrrr)\rrrr)-BC\llll( 1-\dddd{\aaaa}{2}\rrrr)^2h^{2\aaaa}.$$
Numerical solution NS4(**) of the system of fractional differential equations \eqref{System} has initial conditions
	$u_0=y_0,u_1=\widetilde{u}_1,v_0=z_0,v_1=\widetilde{v}_1$. The numbers $u_n$ and $v_n$ are computed with:
	$$u_n=\dddd{1}{\widehat{D}}\llll(\llll( \llllll_0^{(\aaaa)}+D h^\aaaa \llll( 1-\dddd{\aaaa}{2}\rrrr)\rrrr)S_n-Bh^\aaaa \llll( 1-\dddd{\aaaa}{2}\rrrr)Q_n\rrrr),$$
		$$v_n=\dddd{1}{\widehat{D}}\llll(-C h^\aaaa \llll( 1-\dddd{\aaaa}{2}\rrrr)S_n+\llll( \llllll_0^{(\aaaa)}+A h^\aaaa \llll( 1-\dddd{\aaaa}{2}\rrrr)\rrrr)Q_n\rrrr).$$
The numerical results for the error and the order of second-order numerical solutions NS4(30) and  NS4(37) of the solution $y(x)$ of the system of fractional differential equations \eqref{Example3} are  presented  in Table 10 and Table 11.
	\begin{table}[!ht]
	\caption{Maximum error and order of numerical solution NS3(22) of the solution $y(x)$ of \eqref{Example3} when $\aaaa=0.2,\aaaa=0.5$ and $\aaaa=0.9$.}
	\small
	\centering
  \begin{tabular}{ |l | c  c | c  c | c  c| }
		\hline
		\multirow{2}*{ $\quad \boldsymbol{h}$}  & \multicolumn{2}{c|}{$\boldsymbol{\aaaa=0.2}$} & \multicolumn{2}{c|}{$\boldsymbol{\aaaa=0.5}$}  & \multicolumn{2}{c|}{$\boldsymbol{\aaaa=0.9}$} \\
		\cline{2-7}
   & $Error$ & $Order$  & $Error$ & $Order$  & $Error$ & $Order$ \\
		\hline
$0.00625$    & $0.00023740$  & $1.6637$  & $0.00213254$  & $1.4523$    & $0.02108710$   & $1.0981$        \\
$0.003125$   & $0.00007385$  & $1.6846$  & $0.00077199$  & $1.4659$    & $0.00984709$   & $1.0986$        \\
$0.0015625$  & $0.00002267$  & $1.7038$  & $0.00027752$  & $1.4761$    & $0.00459682$   & $1.0991$        \\
$0.00078125$ & $6.9\times 10^{-6}$  & $1.7201$  & $0.00009927$  & $1.4831$    & $0.00214536$   & $1.0994$        \\
\hline
  \end{tabular}
	\end{table}
	\begin{table}[!ht]
	\caption{Maximum error and order of numerical solution NS3(25) of the solution $y(x)$ of \eqref{Example3} when $\aaaa=0.2,\aaaa=0.5$ and $\aaaa=0.9$.}
	\small
	\centering
  \begin{tabular}{| l | c  c | c  c | c  c| }
		\hline
		\multirow{2}*{ $\quad \boldsymbol{h}$}  & \multicolumn{2}{c|}{$\boldsymbol{\aaaa=0.2}$} & \multicolumn{2}{c|}{$\boldsymbol{\aaaa=0.5}$}  & \multicolumn{2}{c|}{$\boldsymbol{\aaaa=0.9}$} \\
		\cline{2-7}
   & $Error$ & $Order$  & $Error$ & $Order$  & $Error$ & $Order$ \\
		\hline
$0.00625$    & $0.00003134$         & $1.9964$   & $0.00005672$         & $2.0110$    & $0.00008675$          & $2.0370$       \\
$0.003125$   & $7.8\times 10^{-6}$  & $1.9983$   & $0.00001409$         & $2.0086$    & $0.00002119$   & $2.0355$       \\
$0.0015625$  & $1.9\times 10^{-6}$  & $1.9993$   & $3.5\times 10^{-6}$  & $2.0065$    & $5.2\times 10^{-6}$   & $2.0311$        \\
$0.00078125$ & $4.9\times 10^{-6}$  & $1.9997$   & $8.7\times 10^{-7}$  & $2.0048$    & $1.2\times 10^{-6}$   & $2.0292$        \\
\hline
  \end{tabular}
	\end{table}
	\begin{table}[ht]
	\caption{Maximum error and order of numerical solution NS4(5) of the solution $y(x)$ of \eqref{Example3} when $\aaaa=0.2,\aaaa=0.5$ and $\aaaa=0.9$.}
	\small
	\centering
  \begin{tabular}{ |l | c  c | c  c | c  c| }
		\hline
		\multirow{2}*{ $\quad \boldsymbol{h}$}  & \multicolumn{2}{c|}{$\boldsymbol{\aaaa=0.2}$} & \multicolumn{2}{c|}{$\boldsymbol{\aaaa=0.5}$}  & \multicolumn{2}{c|}{$\boldsymbol{\aaaa=0.9}$} \\
		\cline{2-7}
   & $Error$ & $Order$  & $Error$ & $Order$  & $Error$ & $Order$ \\
		\hline
$0.00625$    & $2.7\times 10^{-6}$  & $1.9959$  & $6.1\times 10^{-6}$  & $1.9668$    & $0.00001588$         & $1.9937$        \\
$0.003125$   & $6.9\times 10^{-7}$  & $1.9979$  & $1.5\times 10^{-6}$  & $1.9747$    & $3.9\times 10^{-6}$  & $1.9963$        \\
$0.0015625$  & $1.7\times 10^{-7}$  & $1.9989$  & $3.9\times 10^{-7}$  & $1.9812$    & $9.9\times 10^{-7}$  & $1.9979$        \\
$0.00078125$ & $4.3\times 10^{-8}$  & $1.9994$  & $9.9\times 10^{-8}$  & $1.9863$    & $2.5\times 10^{-7}$  & $1.9988$        \\
\hline
  \end{tabular}
	\end{table}
	\begin{table}[!ht]
	\caption{Maximum error and order of numerical solution NS4(37) of the solution $y(x)$ of \eqref{Example3} when $\aaaa=0.2,\aaaa=0.5$ and $\aaaa=0.9$.}
	\small
	\centering
  \begin{tabular}{ |l | c  c | c  c | c  c |}
		\hline
		\multirow{2}*{ $\quad \boldsymbol{h}$}  & \multicolumn{2}{c|}{$\boldsymbol{\aaaa=0.2}$} & \multicolumn{2}{c|}{$\boldsymbol{\aaaa=0.5}$}  & \multicolumn{2}{c|}{$\boldsymbol{\aaaa=0.9}$} \\
		\cline{2-7}
   & $Error$ & $Order$  & $Error$ & $Order$  & $Error$ & $Order$ \\
		\hline
$0.00625$    & $0.00009952$         & $2.0137$  & $0.00020851$         & $2.0089$    & $0.00015518$         & $1.9852$        \\
$0.003125$   & $0.00002479$         & $2.0055$  & $0.00005198$         & $2.0042$    & $0.00003900$         & $1.9928$        \\
$0.0015625$  & $6.2\times 10^{-6}$  & $2.0018$  & $0.00001298$         & $2.0019$    & $9.8\times 10^{-6}$  & $1.9960$        \\
$0.00078125$ & $1.5\times 10^{-6}$  & $2.0009$  & $3.2\times 10^{-6}$  & $2.0009$    & $2.4\times 10^{-6}$  & $1.9979$        \\
\hline
  \end{tabular}
	\end{table}
	
	 \section{Conclusions}
 In the present paper we showed that the properties of the approximations of the Caputo derivative  are preserved when the weights with an index greater than $\left\lceil N/p\right\rceil$ are replaced by the first  terms of their asymptotic expansions,  where $p$ is a positive number.
 In section 4 we obtained an approximation \eqref{L8} of the Caputo derivative by modifying  the last two weights of the  Gr\"{u}nwald-Letnikov approximation. Approximation \eqref{L8} is a second-order shifted approximation for the Caputo derivative  for all functions $y\in C^2[0,x]$. In future work we are going to apply  the methods and the approximations of the Caputo derivative  discussed in the paper for numerical solution of fractional differential equations with  singular and non-singular  solutions.

\end{document}